\documentclass{article}
\usepackage[utf8]{inputenc}
\usepackage{amsmath}
\usepackage{amssymb}
\usepackage{amsthm}
\usepackage{xcolor}
\usepackage{dsfont}
\usepackage{authblk}
\usepackage{bm}
\usepackage{bbm}
\usepackage{graphicx}

\usepackage{hyperref}
\usepackage{algorithm2e}
\usepackage{chemfig}
\RestyleAlgo{ruled}
\usepackage{biblatex}  
\usepackage{tikz}
\usetikzlibrary{calc}
\usetikzlibrary{shapes.geometric}
\usepackage{subcaption}
\usepackage{appendix}
\usepackage{comment}
\usepackage{tablefootnote}
\usepackage{empheq}
\usepackage{float}
\usepackage[symbol]{footmisc}

\title{A quadratic Grassmann manifold optimization problem arising from quantum embedding methods}
\author[1]{Thomas Ayral}
\author[2,3]{Eric Canc\`es}
\author[4]{Fabian M. Faulstich}
\author[5,6]{Lin Lin} 
\author[2,3]{$\mbox{Alicia Negre}$}
\affil[1]{CPHT, CNRS, Ecole Polytechnique, IP Paris, F-91128 Palaiseau, France}
\affil[2]{$\mbox{CERMICS, ENPC, IP Paris, CNRS, F-77455 Marne-la-Vall\'ee, France}$}
\affil[3]{MATHERIALS team-project, Inria Paris, France}
\affil[4]{Department of Mathematical Sciences, Rensselaer Polytechnic Institute, $\mbox{Troy, NY 12180, USA}$}
\affil[5]{Department of Mathematics, University of California, Berkeley, CA 94720, USA}
\affil[6]{Applied Mathematics and Computational
Research Division, Lawrence Berkeley National Laboratory, Berkeley, CA 94720, USA}

\def\N{\mathbb N}
\def\R{\mathbb R}

\newtheorem{theorem}{Theorem}
\newtheorem{remark}[theorem]{Remark}
\newtheorem{lemma}[theorem]{Lemma}
\newtheorem{proposition}[theorem]{Proposition}

\newcommand{\tr}{{\rm Tr}}

\newcommand{\1}{{\mathds 1}}

\begin{document}

\maketitle

\begin{abstract}
This article presents a mathematical analysis and numerical strategies for solving the optimization problem of minimizing the quadratic function $J(P) = \text{Tr}(BP)- \frac{1}{2} \text{Tr}(A P A P)$,  where $A,B \in \R^{M \times M}_{\rm sym}$, with $A \succeq 0$, over the Grassmann manifold ${\rm Gr}(m,\R^M)$. While this problem is non-convex and typically admits non-global local minima - posing challenges for Riemannian optimization and self-consistent field (SCF) algorithms - we identify cases where the global minimizer can be obtained by solving an auxiliary convex problem. When this approach is not directly applicable, the solution to the auxiliary problem still serves as an effective initialization for Riemannian optimization methods and SCF algorithms, significantly improving their performance. This work is motivated by applications in quantum embedding methods, particularly in the construction of bath orbitals, where such optimization problems naturally arise.
\end{abstract}

\setcounter{tocdepth}{2}               
\tableofcontents

\section{Introduction}

This article is concerned with the mathematical analysis and numerical treatment of the quadratic Grassmann optimization problem 
\begin{equation}
\label{eq:OptProblem}
\min_{P \in \mathcal M} J(P), 
\end{equation}
where $\mathcal M$ is the real Grassman manifold
\begin{equation}\label{eq:Grassmann_mM}
\mathcal M:= {\rm Gr}(m,\R^M):=\left\{ P \in \R^{M \times M}_{\rm sym} \; | \; P^2=P, \; \tr(P)=m \right\},
\end{equation}
(the set of rank-$m$ orthogonal projectors in $\R^M$) and $J:\R^{M\times M}_{\rm sym} \to \R$ the quadratic function defined by
\begin{equation}
\label{eq:CostFunctionJ}
    J(P) = \text{Tr}(BP)- \frac{1}{2} \text{Tr}(A P A P) = \text{Tr}(BP) - \frac{1}{2} \lVert A^{\frac{1}{2}} P A^{\frac{1}{2}} \rVert^2 \quad \mbox{with} \quad A,B \in \R^{M \times M}_{\rm sym}, \quad A \succeq 0,
\end{equation}
where $\|\cdot\|$ denotes the Frobenius norm.
This quadratic Grassmann optimization problem arises naturally in embedding methods used in quantum chemistry and physics. In~\cite{negre2025DMET}, we revisit and extend the quantum embedding method {\it Density-Matrix Embedding Theory} (DMET)~\cite{DMET2012}. In this extension, the generalized bath construction leads to an optimization problem of the form~\eqref{eq:OptProblem}--\eqref{eq:CostFunctionJ}.

\medskip

To the best of our knowledge, this problem has not been studied in the mathematical literature. Beyond its application in DMET, the problem~\eqref{eq:OptProblem}--\eqref{eq:CostFunctionJ} is of independent interest for several reasons:
\begin{itemize}
\item Grassman optimization is a textbook example to illustrate Riemannian optimization on quotient manifolds (see e.g.~\cite[Chapter 3]{absil2008optimization}, ~\cite[Chapter 9]{boumal2023introduction}). Minimizing a linear functional $P \mapsto \text{Tr}(BP)$ over the Grassmann manifold ${\rm Gr}(m;\R^M)$ reduces to computing the lowest $m$ eigenmodes of the symmetric matrix $B \in \R^{M \times M}_{\rm sym}$. The quadratic function $J$ is one of the simplest functionals beyond the trivial case of a linear functional;
    \item problem~\eqref{eq:OptProblem}--\eqref{eq:CostFunctionJ} is not a generic quadratic Grassmann optimization problem. As discussed below, it has specific mathematical properties that can be exploited in numerical methods;
    \item problem ~\eqref{eq:OptProblem}--\eqref{eq:CostFunctionJ} is reminiscent to the Brockett problem~\cite{boumal2023} in which the functional $J$ defined in~\eqref{eq:CostFunctionJ} is minimized over the Lie group $O(M)$.
\end{itemize}

This article is organized as follows. The mathematical properties of problem \eqref{eq:OptProblem}--\eqref{eq:CostFunctionJ} are presented Section~\ref{sec:analysis}. After recalling the first and second-order optimality conditions in Section~\ref{sec:opt_cond}, we show in Section~\ref{sec:Aufbau} that any global minimizer of~\eqref{eq:OptProblem}--\eqref{eq:CostFunctionJ} satisfies the {\em Aufbau principle}, a property shared by the (unrestricted) Hartree--Fock problem~\cite{BLLS94} but not by all quadratic Grassmann optimization problems. In Section~\ref{sec:convexification}, we introduce a convexification of problem~\eqref{eq:OptProblem}--\eqref{eq:CostFunctionJ}, i.e., 

\begin{equation}
\label{eq:convexification}
\inf_{D \in {\rm CH}(\mathcal M)} \widetilde J(D),
\end{equation}

\noindent 
where ${\rm CH}(\mathcal M)$ is the convex hull of $\mathcal M$, and $\widetilde J: \R^{M \times M}_{\rm sym} \to \R$ is defined by 

\begin{equation}
\widetilde J(D):=  \tr(CD)+ \frac 14 \| [A,D] \|^2  \quad \mbox{with} \quad C := B- \frac 12A^2.
\end{equation}
Note that $J\big|_\mathcal{M} = \widetilde J\big|_\mathcal{M}$, while $J\neq \widetilde J$ on $\mathcal{\R^{M \times M}_{\rm sym}}$ unless $A=0$. Solving this convex problem numerically allows one to compute the gradient $H_* \in \R^{M \times M}_{\rm sym}$ of $\widetilde J$ at the found minimizer. The matrix $H_*$ is actually independent of the specific minimizer attained by the algorithm used. In addition, if it satisfies a spectral gap condition (vide infra), then \eqref{eq:convexification} admits a unique solution, this solution lies on $\mathcal M$, and, remarkably, it is the {\em global} minimizer of the non-convex problem \eqref{eq:OptProblem}--\eqref{eq:CostFunctionJ}.
This property extends the well-known result that, if there is a gap between the $m^{\rm th}$ and $(m+1)^{\rm st}$ eigenvalues of $B$, then the minimizer of $D \mapsto \tr(BD)$ over ${\rm CH}(\mathcal M)$ is unique, belongs to $\mathcal M$, and is the global minimizer over $\mathcal M$. In Section~\ref{sec:special}, we study three special cases: (i) A and B commute, in which case an explicit solution of~\eqref{eq:OptProblem}--\eqref{eq:CostFunctionJ} can be derived from the eigenmodes of the matrix C; (ii) the commutator $[A,B]$ is small, i.e., a perturbation of case (i); and (iii) the matrices A and B arise from a DMET bath construction problem. In the latter case, $A$ and $B$ arise as blocks in a suitable block decomposition of the matrix
\begin{align*}
     \gamma = \begin{pmatrix}
        \gamma_{\rm frag } & \gamma_{{\rm frag}, \rm ext}\\
        \gamma_{{\rm ext}, \rm frag} & \gamma_{\rm ext} \end{pmatrix}, \quad \gamma_{\rm frag } \in \R^{\ell \times \ell}_{\rm sym}, \quad   \gamma_{\rm ext} \in \R^{M \times M}_{\rm sym}, \quad \gamma_{{\rm frag}, \rm ext} = \gamma_{{\rm ext}, \rm frag}^T \in \mathbb{R}^{M \times \ell},
\end{align*}
where the subscripts ``frag'' and  ``ext'' denote the fragment and exterior, respectively. The matrix $\gamma$ satisfies $0 \preceq \gamma  \preceq I$, and defines the matrices $A$ and $B$ through the formulae
\begin{align}
\label{rdm1_AB}
A=\gamma_{\rm ext} \in \R^{M \times M}_{\rm sym}
\qquad {\rm and} \qquad 
B=\frac 12 \left(\gamma_{\rm ext}^2-\gamma_{\rm ext,frag} \gamma_{\rm frag,ext}\right)\in \R^{M \times M}_{\rm sym}.
\end{align}
We derive necessary and sufficient conditions for {\em full disentanglement}, a concept precisely defined in~Section~\ref{sec:DMET}. Since this notion is relevant to quantum embedding methods, we compare our results with those reported in the physical chemistry literature.
In Section~\ref{sec:numerical_methods}, we detail the numerical methods used to solve the original problem \eqref{eq:OptProblem}--\eqref{eq:CostFunctionJ} or its convex relaxation~\eqref{eq:convexification}. Finally, in Section~\ref{sec:numerics}, we present numerical results. We begin by considering illustrative and simple examples using $2 \times 2$ or $3 \times 3$  matrices $A$ and $B$ (Section~\ref{sec:random}). We then present an example arising from the DMET bath-construction problem for the benzene molecule (Section~\ref{sec:molecules}).

\section{Mathematical analysis}
\label{sec:analysis}

In this analysis, the Grassmann manifold $\mathcal M$ is viewed as a Riemannian submanifold of $\mathbb R^{M \times M}_{\rm sym}$ endowed with the metric induced by the Frobenius inner product $\langle Y,X \rangle := \tr(X^TY)$.

\subsection{First and second order optimization conditions}
\label{sec:opt_cond}

For completeness and the reader's convenience, this section collects several known results.
The tangent space to $\mathcal M$ at $P\in\mathcal M$ is
\begin{align}
  \mathcal T_P \mathcal M
&= \{ X\in\mathbb R^{M\times M}_{\mathrm{sym}} \,:\, XP + PX = X \} \nonumber \\
&= \{ X = [\Omega, P] : \Omega^T = -\Omega \in \R^{M \times M}_{\rm antisym} \} \nonumber \\
&=  \{ X\in\mathbb R^{M\times M}_{\mathrm{sym}} \,:\, PXP =(1-P)X(1-P) = 0 \} . \label{eq:tangent_space_def}
\end{align}    
The orthohonal projection of a matrix $X$ in $\mathbb{R}_{\rm sym}^{M\times M}$ onto $\mathcal{T}_{P}\mathcal{M}$ has an explicit expression:
    \begin{equation}
    \label{projector_tangent}
        \Pi_{\mathcal{T}_{P}\mathcal M}(X) = [[X,P], P], \qquad \mbox{ for all } P \in \mathcal M, \; X \in \mathbb{R}_{\rm sym}^{M\times M}.
    \end{equation}  
Let $J : \R^{M \times M}_{\rm sym} \to \R$ be the cost function introduced in \eqref{eq:CostFunctionJ} and 
\begin{equation}
\label{eq:gradient}
    G(P):=\nabla J(P)= B-AP A
\end{equation}
the gradient of $J$ at point $P \in \R^{M \times M}_{\rm sym}$. The Riemannian gradient ${\rm grad}_{\mathcal M}J(P)$ of the restriction of $J$ to $\mathcal M$ at some point $P \in \mathcal M$ is given by
\begin{align}
    \label{eq:Riemann_gradient}
 {\rm grad}_{\mathcal M}J(P)= \Pi_{\mathcal{T}_{P}\mathcal M}(\nabla J(P)) = [[G(P),P],P].   
\end{align}
Equation~\eqref{eq:Riemann_gradient} provides a natural extension 
$$
\mathcal G : \R^{M \times M}_{\rm sym} \ni P \mapsto \mathcal G(P):=[[G(P),P],P] \in  \R^{M \times M}_{\rm sym}
$$
of the vector field ${\rm grad}_{\mathcal M}J(P)$, originally defined on $\mathcal M$, to the whole space $\R^{M \times M}_{\rm sym}$.
This allows us to easily compute the Riemannian Hessian of $J$ at $P$ as 
\begin{align}\label{eq:Hessian}
\forall X \in \mathcal{T}_{P}\mathcal{M}, \quad \mbox{Hess}_{\mathcal{M}} J(P)[X] = \Pi_{\mathcal{T}_P\mathcal{M}} \big(D\mathcal G(P)[X]\big),    
\end{align}
where $D\mathcal G(P) \in \mathcal L(\R^{M \times M}_{\rm sym})$ is the derivative of $\mathcal G$ at $P \in \mathcal M$. 
Since $J$ is quadratic, its Hessian is independent of $P$, and explicitly given by
$$
\forall P \in \mathcal M, \quad \forall X \in T_P\mathcal M, \quad D^2J(P)[X]=-AXA.
$$
Combining this with~\eqref{projector_tangent} this yields
\begin{align} \label{eq:DGtilde}
D\mathcal G(P) &= [D^2J(P)[X],P],P] + [[G(P), X],P] + [[G(P),P],X]].
\end{align}
Note that if 
$$
P = 
U \left( \begin{array}{cc} I_m & 0 \\ 0 & 0 \end{array} \right)U^T \in \mathcal M 
\quad \mbox{and} \quad 
X = 
U \begin{pmatrix} X_{\rm oo} & X_{\rm ov} \\ X_{\rm vo} &  X_{\rm vv} \end{pmatrix} U^T \in \R^{M \times M},
$$
then 
\begin{align} 
\label{eq:proj_TPM}
[X,P] 
=
U \begin{pmatrix} 0 & -X_{\rm ov} \\ X_{\rm vo} & 0\end{pmatrix} U^T 
\qquad {\rm and} \qquad  
\Pi_{T_P\mathcal M} X 
= [[X,P],P] 
= U \begin{pmatrix} 0 & X_{\rm ov} \\ X_{\rm vo} & 0 \end{pmatrix} U^T.
\end{align}
Moreover, $X \in T_P\mathcal M$ iff $X_{\rm oo}= 0$ and $X_{\rm vv}=0$.
The subscripts $\rm o$ and $\rm v$ stand for ``virtual'' and ``occupied''. In mean-field quantum chemistry models such as Hartree--Fock and Kohn--Sham,  the matrix $P$ represents the one-body density matrix  of the quantum state, and decomposing $\mathbb R^M$ as
   \begin{align*}
        \mathbb{R}^{M} = \mathrm{Ran}(P) \oplus \ker(P)
    \end{align*}
    the occupied orbitals are the elements in $\mathrm{Ran}(P)$ and the virtual orbitals the elements in ${\rm Ker}(P)$.

We deduce from~\eqref{eq:Hessian}--\eqref{eq:proj_TPM} that
\begin{align*} 
\forall X \in \mathcal{T}_{P}\mathcal{M}, \quad \mbox{\rm Hess}_{\mathcal{M}} J(P)[X] &= [[D^2J(P)[X],P],P] + [[G(P), X],P] + [[[[G(P),P],X]],P],P] \nonumber \\
&= -[[AXA,P],P] + [[G(P), X],P].
\end{align*}
Following the notation from~\cite{Cances_2021}, we finally obtain
\begin{align} \label{eq:hessian2}
 \mbox{\rm Hess}_{\mathcal{M}} J(P) &=  \Omega(P) + K(P),
 \end{align}
 with 
 \begin{equation}
\label{eq:omega_2ndorder}
\begin{array}{rcl}
    \Omega(P) :  \mathcal{T}_{P} \mathcal M  & \to &  \mathcal{T}_{P} \mathcal M \\
 X & \mapsto &  [[G(P), X],P] \end{array} \qquad 
 \begin{array}{rcl}
    K(P) :  \mathcal{T}_{P} \mathcal M  & \to &  \mathcal{T}_{P} \mathcal M \\
 X & \mapsto &  -[[AXA,P],P] \end{array}.
\end{equation}
The term $K(P)\equiv \Pi_{\mathcal{T}_{P}\mathcal M}  (D^2J(P)) \Pi_{\mathcal{T}_{P}\mathcal M}$ is simply the projection onto the tangent space of the Hessian of the function $J$, while the term $\Omega(P)$ originates from the curvature of the Grassmann manifold $\mathcal M$.

\begin{proposition}[Optimality conditions] \label{Prop:commutation1storder} Let 
$P_\star \in \mathcal M$ and $U \in O(M)$ such that
\begin{align}
    P_\star = U\begin{pmatrix} I_m & 0 \\ 0 & 0\end{pmatrix}U^T, \qquad  
    A = U \begin{pmatrix} A_{\rm oo} & A_{\rm ov} \\ A_{\rm vo} & A_{\rm vv}\end{pmatrix}U^T, \qquad B = U\begin{pmatrix} B_{\rm oo} & B_{\rm ov} \\ B_{\rm vo} & B_{\rm vv}\end{pmatrix}U^T. \label{eq:Pstar_A_B}
\end{align}
\begin{itemize}
\item First-order condition: $P_\star$ is a critical point of $J$ on $\mathcal M$ if and only if 
\begin{align}
\label{eq:commutation_gradP}
    [G(P_\star),P_\star] = [B-AP_\star A,P_\star] =0 \quad \mbox{or equivalently} \quad B_{\rm ov}=A_{\rm oo}A_{\rm ov}.
\end{align}
\item Second-order condition: if $P_\star$ satisfies~\eqref{eq:commutation_gradP}, a necessary condition for $P_\star$ being a local minimizer of $J$ on $\mathcal M$ is 
\begin{align} \label{eq:2nd_order_cond}
  \forall X =   U\begin{pmatrix} 0 & X_{\rm ov} \\ X_{\rm vo} & 0\end{pmatrix}U^T \in \mathcal{T}_{P}\mathcal{M}, \quad \langle X, \mbox{\rm Hess}_{\mathcal{M}} J(P_*)[X] \rangle \ge 0,
\end{align}
or equivalently 
$$
 \tr\left( G_{\rm vv}(X_{\rm ov}^TX_{\rm ov})\right)- \tr \left(G_{\rm oo}(X_{\rm ov}X_{\rm ov}^T)\right) - \tr \left( X_{\rm ov}  A_{\rm vo }X_{\rm ov} A_{\rm vo}+ A_{\rm oo}X_{\rm ov}A_{\rm vv} X_{\rm ov}^T \right) \ge 0,
$$
with $G_{\rm oo}=B_{\rm oo}-A_{\rm oo}^2$ and $G_{\rm vv}=B_{\rm vv}-A_{\rm vo}A_{\rm ov}$. 
\end{itemize}
\end{proposition}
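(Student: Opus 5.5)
The plan is to work entirely in the basis given by $U$, so that $P_\star=\mathrm{diag}(I_m,0)$, and to read off both conditions from the block formulas \eqref{eq:proj_TPM} and the Hessian decomposition \eqref{eq:hessian2}. We take as given the standard facts that a point of a Riemannian submanifold is critical iff the Riemannian gradient vanishes, and that a local minimizer has a positive semidefinite Riemannian Hessian there.

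\emph{First-order condition.} By \eqref{eq:Riemann_gradient}, $P_\star$ is critical iff $[[G(P_\star),P_\star],P_\star]=0$. Write $G=G(P_\star)$ in blocks. By \eqref{eq:proj_TPM}, $[G,P_\star]$ has off-diagonal blocks $(-G_{\rm ov},G_{\rm vo})$ and $[[G,P_\star],P_\star]$ has off-diagonal blocks $(G_{\rm ov},G_{\rm vo})$. Both therefore vanish iff $G_{\rm ov}=0$, which proves the equivalence with $[G(P_\star),P_\star]=0$. Next compute $AP_\star A$ blockwise. Since $P_\star=\mathrm{diag}(I_m,0)$,
\[
AP_\star A=\begin{pmatrix} A_{\rm oo}^2 & A_{\rm oo}A_{\rm ov}\\ A_{\rm vo}A_{\rm oo} & A_{\rm vo}A_{\rm ov}\end{pmatrix}.
\]
Hence $G_{\rm ov}=B_{\rm ov}-A_{\rm oo}A_{\rm ov}$, which gives the stated equivalence. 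The same display also yields $G_{\rm oo}=B_{\rm oo}-A_{\rm oo}^2$ and $G_{\rm vv}=B_{\rm vv}-A_{\rm vo}A_{\rm ov}$, which are needed below.

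\emph{Second-order condition.} Necessity of \eqref{eq:2nd_order_cond} follows from positive semidefiniteness of the Riemannian Hessian at a local minimizer, which one can check directly along the curve $t\mapsto e^{t\Omega}P_\star e^{-t\Omega}$ with $X=[\Omega,P_\star]$. The substance is to evaluate $\langle X,\Omega(P_\star)X+K(P_\star)X\rangle$ for $X$ with blocks $X_{\rm ov}$ and $X_{\rm vo}=X_{\rm ov}^T$. Since $\Pi_{\mathcal T}$ is an orthogonal projection and $X$ is tangent, we have $\langle X,[[Y,P],P]\rangle=\langle X,Y\rangle$. Thus
\[
\langle X,K X\rangle=-\tr(XAXA) \quad\text{and}\quad \langle X,\Omega X\rangle=\langle X,[G,X]\rangle .
\]
At a critical point $G$ is block diagonal, so $[G,X]$ has off-diagonal blocks $G_{\rm oo}X_{\rm ov}-X_{\rm ov}G_{\rm vv}$ and its transpose, up to sign. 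This gives $\langle X,[G,X]\rangle=2\bigl(\tr(G_{\rm vv}X_{\rm ov}^TX_{\rm ov})-\tr(G_{\rm oo}X_{\rm ov}X_{\rm ov}^T)\bigr)$, with the sign determined by the order in the commutator.

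For $\tr(XAXA)$, the block multiplication gives
\[
\tr(XAXA)=\tr(X_{\rm ov}A_{\rm vo}X_{\rm ov}A_{\rm vo})+\tr(X_{\rm ov}A_{\rm vv}X_{\rm vo}A_{\rm oo})+\tr(X_{\rm vo}A_{\rm oo}X_{\rm ov}A_{\rm vv})+\tr(X_{\rm vo}A_{\rm ov}X_{\rm vo}A_{\rm ov}).
\]
The first and last terms are equal by transposition, as are the two middle terms. So $\tr(XAXA)=2\,\tr(X_{\rm ov}A_{\rm vo}X_{\rm ov}A_{\rm vo}+A_{\rm oo}X_{\rm ov}A_{\rm vv}X_{\rm ov}^T)$. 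Dividing the whole quadratic form by 2 gives the stated inequality.

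\emph{Main obstacle.} The delicate step is getting the signs right in $\Omega(P_\star)$. The paper writes $[[G,X],P]$, whereas the standard curvature term for Grassmann is $[P,[G,X]]$-type, and the orientation matters. I would settle it by checking the sign against the linear case $A=0$. There $J=\tr(BP)$, and the Hessian must be $\tr(B_{\rm vv}X_{\rm ov}^TX_{\rm ov})-\tr(B_{\rm oo}X_{\rm ov}X_{\rm ov}^T)\ge0$, which is the classical Aufbau condition. If the literal formula produced the opposite sign, I would recompute directly via the second derivative of $J(e^{t\Omega}P_\star e^{-t\Omega})$ at $t=0$, using $G_{\rm ov}=0$.
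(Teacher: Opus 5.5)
Your first-order argument and your treatment of the $K(P_\star)$ term are correct and follow the paper; this includes the four-term block expansion of $\tr(XAXA)$, paired by transposition and cyclicity. The gap is in the curvature term.

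You write $\langle X,\Omega(P_\star)X\rangle=\langle X,[G,X]\rangle$ by invoking $\langle X,[[Y,P_\star],P_\star]\rangle=\langle X,Y\rangle$. But $\Omega(P_\star)X=[[G,X],P_\star]$ contains only \emph{one} commutator with $P_\star$, so it is not of that form. For the antisymmetric matrix $W:=[G,X]$, the matrices $[W,P_\star]$ and $[[W,P_\star],P_\star]$ differ by the sign of the ov block: the former is symmetric, the latter antisymmetric. Consequently the quantity you reduce to is identically zero, since $\langle X,[G,X]\rangle=\tr(XGX)-\tr(X^2G)=0$ for every symmetric $X$.

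Your block evaluation reaches the stated expression only through a second, compensating slip. The vo block of $[G,X]$ is $G_{\rm vv}X_{\rm vo}-X_{\rm vo}G_{\rm oo}=-(G_{\rm oo}X_{\rm ov}-X_{\rm ov}G_{\rm vv})^T$, so the ov and vo contributions cancel rather than double. The difficulty you anticipate at the end is therefore not a sign ambiguity in the paper's formula: your $A=0$ check would return $0$, not the opposite sign. The paper's orientation $[[G,X],P]$ is the correct one, and it does reproduce the Aufbau condition when $A=0$.

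The repair is the paper's direct computation. The block formula \eqref{eq:proj_TPM} for $[W,P_\star]$ holds for arbitrary $W\in\R^{M\times M}$. Applied to $W=[G,X]$, with $G$ block diagonal at a critical point, it flips the sign of the ov block only, giving
\[
\Omega(P_\star)X=U\begin{pmatrix}0 & X_{\rm ov}G_{\rm vv}-G_{\rm oo}X_{\rm ov}\\ G_{\rm vv}X_{\rm vo}-X_{\rm vo}G_{\rm oo} & 0\end{pmatrix}U^T .
\]
This matrix is symmetric, and pairing it with $X$ yields $2\tr(G_{\rm vv}X_{\rm ov}^TX_{\rm ov})-2\tr(G_{\rm oo}X_{\rm ov}X_{\rm ov}^T)$. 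Equivalently, you can use $\langle X,[W,P_\star]\rangle=\tr([P_\star,X]\,W)$, valid for any $W$ and symmetric $X$. Your fallback of differentiating $J$ twice along $t\mapsto e^{tS}P_\star e^{-tS}$, with $S^T=-S$, would also produce the correct curvature term. As written, though, the proposal does not carry that computation out, and the step it does carry out is false.
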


\begin{proof} The first assertion is a direct consequence of~\eqref{eq:gradient},~\eqref{eq:Riemann_gradient} and \eqref{eq:proj_TPM}, and of the fact that
\begin{align*}
    G(P_\star) = \begin{pmatrix} G_{\rm oo}  & G_{\rm ov} \\ G_{\rm vo}  & G_{\rm vv} \end{pmatrix} = \begin{pmatrix} B_{\rm oo} - A_{\rm oo}^2 & B_{\rm ov}-A_{\rm oo}A_{\rm ov} \\ B_{\rm vo} - A_{\rm vo}A_{\rm oo} & B_{\rm vv}-A_{\rm vo}A_{\rm ov} \end{pmatrix}.
\end{align*}
From~\eqref{eq:Pstar_A_B}, we obtain
$$
AXA =  U \begin{pmatrix}
        * &  A_{\rm ov}X_{\rm vo}A_{\rm ov} + A_{\rm oo} X_{\rm ov} A_{\rm vv} \\  A_{\rm vo}X_{\rm ov}A_{\rm vo} + A_{\rm vv} X_{\rm vo} A_{\rm oo} & *  \end{pmatrix} U^T.
$$
and thus 
\begin{align*}
    K(P_\star)[X] = U \begin{pmatrix}
        0 &  -(A_{\rm ov}X_{\rm vo}A_{\rm ov} + A_{\rm oo} X_{\rm ov} A_{\rm vv}) \\  -(A_{\rm vo}X_{\rm ov}A_{\rm vo} + A_{\rm vv} X_{\rm vo} A_{\rm oo}) & 0  \end{pmatrix} U^T.
\end{align*}
On the other hand, using the fact that $[G(P_*),P_*]=0$ is equivalent to $G_{\rm ov}=G_{\rm vo}^T=0$, we get
\begin{align*}
    \Omega(P_\star)[X] = U \begin{pmatrix}
     0   & X_{\rm ov} G_{\rm vv} - G_{\rm oo} X_{\rm ov} \\  G_{\rm vv} X_{\rm vo}  - X_{\rm vo}G_{\rm oo}  & 0  
    \end{pmatrix} U^T.
\end{align*}
This yields
\begin{align*}
  \langle X, \mbox{\rm Hess}_{\mathcal{M}} J(P_\star)[X] \rangle & = 2 \tr\left(X_{\rm ov}^T  (X_{\rm ov} G_{\rm vv} - G_{\rm oo} X_{\rm ov} )\right) - 2 \tr \left( X_{\rm ov}^T  (A_{\rm ov}X_{\rm vo}A_{\rm ov} + A_{\rm oo} X_{\rm ov} A_{\rm vv}) \right).
\end{align*}
Rearranging the terms in the traces, we obtain the expected result.
\end{proof}

\subsection{Aufbau principle}
\label{sec:Aufbau}

The following property turns out to be very useful to design fixed-point iterative algorithms to solve the Euler-Lagrange equations associated to the constrained optimization problem~\eqref{eq:OptProblem}--\eqref{eq:CostFunctionJ} .

\begin{theorem} \label{thm:Aufbau}
    Let $A \in \R^{M \times M}_{\rm sym}$ be a positive semidefinite matrix, $B \in \R^{M \times M}_{\rm sym}$, and $P_\star$ a global minimizer of \eqref{eq:OptProblem} with $J$ given by \eqref{eq:CostFunctionJ}. Then, there exists an orthogonal basis $(x_k)_{1 \le k \le M}$ of $\R^M$ diagonalizing both $P_\star$ and $G(P_\star)$ such that
   \begin{align}
   & P_\star = \sum_{k=1}^m x_kx_k^T, \quad G(P_\star) = \sum_{k=1}^M \lambda_k x_kx_k^T,   \quad x_k^Tx_l=\delta_{kl}, \quad \lambda_1 \le \lambda_2 \le \cdots \le \lambda_M.  \label{eq:Aufbau}
\end{align}
In addition, if $A$ is invertible, then 
\begin{equation} \label{eq:no_unfilled_shell}
\lambda_m < \lambda_{m+1} \quad \mbox{and} \quad    P_\star=\mathds 1_{(-\infty,\mu]}(G(P_\star)) \quad \mbox{for any } \mu \in [\lambda_m ,\lambda_{m+1}).
\end{equation}
\end{theorem}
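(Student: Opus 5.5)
The plan is the classical Aufbau argument of Bach--Lieb--Loss--Solovej, adapted to the concave quadratic term. First, a global minimizer is a critical point, so Proposition~\ref{Prop:commutation1storder} gives $[G(P_\star),P_\star]=0$. Hence $G(P_\star)$ is block diagonal in the decomposition $\R^M=\mathrm{Ran}(P_\star)\oplus\mathrm{Ker}(P_\star)$, i.e.\ $G_{\rm ov}=0$. Diagonalizing $G_{\rm oo}$ and $G_{\rm vv}$ separately gives an orthonormal basis $(x_k)$ with $P_\star=\sum_{k\le m}x_kx_k^T$ and $G(P_\star)=\sum_k\lambda_kx_kx_k^T$. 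The eigenvalues within each block can be ordered increasingly. It then remains to show that every occupied eigenvalue is $\le$ every virtual one, that is, $\max\sigma(G_{\rm oo})\le\min\sigma(G_{\rm vv})$.

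To prove this, I would perturb $P_\star$ by a rotation exchanging one occupied eigenvector $x_i$ ($i\le m$) with one virtual eigenvector $x_a$ ($a>m$). The cleanest choice is the full swap $P'=P_\star-x_ix_i^T+x_ax_a^T\in\mathcal M$. Writing $\Delta=P'-P_\star=x_ax_a^T-x_ix_i^T$ and using that $J$ is quadratic with Hessian $-A\cdot A$,
\begin{align*}
J(P')-J(P_\star)=\langle G(P_\star),\Delta\rangle-\tfrac12\tr(A\Delta A\Delta)=\lambda_a-\lambda_i-\tfrac12\|A^{1/2}\Delta A^{1/2}\|^2 .
\end{align*}
Global minimality gives $J(P')\ge J(P_\star)$, hence
\begin{equation*}
\lambda_a-\lambda_i\ge\tfrac12\|A^{1/2}\Delta A^{1/2}\|^2\ge0 .
\end{equation*}
Here $A\succeq0$ is exactly what makes the quadratic remainder have a favorable sign. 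Taking $i$ with $\lambda_i$ maximal in the occupied block and $a$ with $\lambda_a$ minimal in the virtual block yields the ordering~\eqref{eq:Aufbau}. Note that the second-order condition alone would not suffice, since the curvature term enters with the wrong sign; the finite swap and the concavity of $P\mapsto-\frac12\tr(APAP)$ are the key points.

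For the invertible case, suppose $\lambda_m=\lambda_{m+1}$. Apply the inequality above with $i=m$, $a=m+1$. This forces $A^{1/2}\Delta A^{1/2}=0$, and since $A$ is invertible, $\Delta=0$. But $\Delta=x_{m+1}x_{m+1}^T-x_mx_m^T\neq0$ because $x_m\perp x_{m+1}$, a contradiction. Hence $\lambda_m<\lambda_{m+1}$. Then $P_\star$ is the spectral projector of $G(P_\star)$ onto its $m$ lowest eigenvalues, and these are separated from the rest. Therefore $P_\star=\1_{(-\infty,\mu]}(G(P_\star))$ for any $\mu\in[\lambda_m,\lambda_{m+1})$.

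The main subtle step is the sign bookkeeping in the swap computation, namely checking that the remainder is $-\frac12\|A^{1/2}\Delta A^{1/2}\|^2$ and that it enters with a sign that strengthens the inequality. This is immediate from the Taylor expansion of the quadratic $J$. Everything else is linear algebra.
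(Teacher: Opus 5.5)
Your proposal is correct and follows essentially the same route as the paper: commutation $[G(P_\star),P_\star]=0$ from the first-order condition, then the single swap $P_\star - x_ix_i^T + x_ax_a^T$, which yields $\lambda_a-\lambda_i \ge \frac12\|A^{1/2}(x_ax_a^T-x_ix_i^T)A^{1/2}\|^2\ge 0$, with invertibility of $A$ forcing the strict gap. Your version, swapping an arbitrary occupied $x_i$ with an arbitrary virtual $x_a$, is in fact slightly more explicit than the paper's, which only swaps $x_m$ before reordering the eigenvalues.
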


In words, the above results mean that $P_\star$ is the orthogonal projector on an $m$-dimensional vector subspace of $\R^M$ spanned by eigenvectors of $G(P_\star)$ associated with the lowest $m$ eigenvalues. A similar property is satisfied by the (unrestricted) Hartree--Fock problem in quantum chemistry (see e.g. \cite{szabo2012modern}, or~\cite{Bach:2022ewx} for a mathematical presentation). In that case, the Grassmann manifold is the set of admissible Hartree--Fock density matrices and the cost function is the Hartree--Fock energy functional. That function is also the sum of a linear term and a quadratic term, the latter being itself the sum of two terms: the Hartree term and the exchange term. The analogue of~\eqref{eq:Aufbau} is referred to as the {\em Aufbau} principle in the quantum chemistry literature, while the analogue of \eqref{eq:no_unfilled_shell} is called the {\em no-unfilled shell property}~\cite{DMET2012}. In Hartree--Fock theory, these two properties originate from the fact that the exchange term perfectly compensates the excess of Hartree energy due to self-interaction~\cite{szabo2012modern}.

\begin{proof}[Proof of Theorem~\ref{thm:Aufbau}]

We infer from \eqref{Prop:commutation1storder} that $G(P_\star)$ and $P_\star$ commute. They are therefore simultaneously diagonalizable, i.e.,
\begin{equation}
        P_\star = \sum_{k= 1}^{m} x_{k} x_{k}^T, \quad 
            G(P_\star) =  \sum_{k=1}^{M} \mu_{k} x_{k} x_{k}^T, \quad 
            x_{k}^T x_{l}  = \delta_{k,l},~~~ 1\leq k,l\leq M.
\end{equation}
Let $m < k \le M$ and 
\begin{equation}
    P := \sum_{i= 1}^{m-1} x_{i} x_{i}^T + x_{k}x_{k}^T = P_\star + x_{k}x_{k}^T - x_{m}x_{m}^T.
\end{equation}
Clearly, $P \in \mathcal M$ and
\begin{equation*}
    \begin{split}
        J(P)  - J(P_\star) & =  {\rm Tr}\left(G(P_\star) (x_{k}x_{k}^T - x_{m}x_{m}^T)\right)  - \frac{1}{2} {\rm Tr}\left( A(x_{k}x_{k}^T - x_{m}x_{m}^T)A(x_{k}x_{k}^T - x_{m}x_{m}^T)\right) \\
        & = (\mu_{k} - \mu_{m}) -  \frac{1}{2} {\rm Tr}\left(A^{\frac{1}{2}}(x_{k}x_{k}^T - x_{m}x_{m}^T)A^{\frac{1}{2}}A^{\frac{1}{2}}(x_{k}x_{k}^T - x_{m}x_{m}^T)A^{\frac{1}{2}}\right)\\
        & = (\mu_{k} - \mu_{m}) -  \frac{1}{2} \lVert S_{km} \rVert^2,
    \end{split}
\end{equation*}
with $S_{km} := A^{\frac{1}{2}}(x_{k}x_{k}^T - x_{m}x_{m}^T)A^{\frac{1}{2}} \in \R^{M \times M}_{\rm sym}$. As $P_\star$ is a minimizer of $J$ on $\mathcal M$, the right-hand side is nonnegative and therefore
$$
\mu_{k} - \mu_{m} \ge \frac{1}{2} \lVert S_{km} \rVert^2 \ge 0.
$$
Reordering the eigenvalues $(\mu_j)_{1 \le j \le M}$ in non-decreasing order, we  obtain~\eqref{eq:Aufbau}. In particular,
$$
\lambda_{m+1} - \lambda_{m} \ge \frac{1}{2} \lVert A^{\frac{1}{2}}(x_{k}x_{k}^T - x_{m}x_{m}^T)A^{\frac{1}{2}} \rVert^2.
$$ 
If $A$ is invertible, then $A^{\frac{1}{2}}(x_{k}x_{k}^T - x_{m}x_{m}^T)A^{\frac{1}{2}} \neq 0$ and therefore $\lambda_{m+1} > \lambda_m$. The formula ${P_\star=\mathds 1_{(-\infty,\mu]}(G(P_\star))}$ then follows from functional calculus for Hermitian matrices.
\end{proof}

\subsection{Convexification}
\label{sec:convexification}

We begin this section with the following key observation.

\begin{lemma} \label{lem:convexification}
    Let $\widetilde J:\R^{M \times M}_{\rm sym} \to \R$ be the quadratic function defined by
\begin{equation}\label{eq:def_tildeJ}
\widetilde J(D) := \tr(CD) + \frac 14 \| [A,D] \|^2  \quad \mbox{with} \quad C := B- \frac 12 A^2.
\end{equation}
Then, for all $P \in \mathcal M$, $\widetilde J(P)=J(P)$. In particular, $J$ and $\widetilde J$ have the same critical points on $\mathcal M$.
\end{lemma}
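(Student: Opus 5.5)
The plan is to expand $\frac14\|[A,D]\|^2$ for a symmetric $D$ and then use the projector identities $P^2=P$. Since $A$ and $D$ are symmetric, $[A,D]^T = DA - AD = -[A,D]$. Hence
\[
\|[A,D]\|^2 = -\tr([A,D]^2) = -\tr(ADAD - AD^2A - DA^2D + DADA) = 2\tr(A^2D^2) - 2\tr(ADAD),
\]
where cyclicity of the trace gives $\tr(AD^2A)=\tr(DA^2D)=\tr(A^2D^2)$ and $\tr(DADA)=\tr(ADAD)$. Therefore
\[
\widetilde J(D) = \tr(BD) - \tfrac12\tr(A^2D) + \tfrac12\tr(A^2D^2) - \tfrac12\tr(ADAD).
\]

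For $P\in\mathcal M$ we have $P^2=P$, so the second and third terms cancel. What remains is $\tr(BP)-\frac12\tr(APAP)=J(P)$, which proves the first claim. Note that the assumption $A\succeq 0$ is not needed here.

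For the statement about critical points, I would argue purely geometrically. Critical points of a function restricted to $\mathcal M$ are the zeros of the Riemannian gradient of that restriction, and the restriction depends only on the values on $\mathcal M$. Since $J|_{\mathcal M}=\widetilde J|_{\mathcal M}$, their Riemannian gradients coincide, and so do their critical points (indeed also their Riemannian Hessians).

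As a consistency check, one can verify directly that $\nabla\widetilde J(P) = C + \frac12(A^2P+PA^2) - APA$. Its projection onto $\mathcal T_P\mathcal M$ equals that of $G(P)=B-APA$, because by \eqref{eq:proj_TPM} the matrix $-\frac12 A^2+\frac12(A^2P+PA^2)$ has the same off-diagonal blocks as $0$: its ov block is $-\frac12(A^2)_{\rm ov}+\frac12(A^2)_{\rm ov}=0$.

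The only step requiring care is the sign and cyclic-trace bookkeeping in the expansion of the commutator norm. Nothing here should be a real obstacle.
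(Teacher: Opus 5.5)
Your proposal is correct and follows the paper's proof: expand $\|[A,P]\|^2=-\tr([A,P]^2)=2\tr(A^2P^2)-2\tr(APAP)$ and use $P^2=P$ to cancel the $A^2$ terms. Your justification of the critical-point claim (the Riemannian gradient depends only on the restriction to $\mathcal M$, which you confirm by showing that the ${\rm ov}$ block of $\nabla\widetilde J(P)-G(P)$ vanishes) spells out what the paper leaves implicit.
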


\begin{proof} Let $P \in \mathcal M$. We note that
    \begin{align*}
    \| [A,P] \|^2 &= \tr([A,P]^T [A,P])
    = -\tr([A,P]^2) = - 2 \tr(APAP) + 2\tr( A^2P^2),
\end{align*}
and since $P^2=P$, we have
$
{J(P) = \tr(BP) - \frac 12 \tr(APAP) = \tr(CP) + \frac 14 \| [A,P] \|^2 = \widetilde J(P)}
$.
\end{proof}

We note that the function $\widetilde J$ is convex, which motivates considering the relaxed problem of minimizing $\widetilde J$ over the convex hull of $\mathcal M$
$$
\mathcal K:={\rm CH}(\mathcal M)=\{ D \in \R^{M \times M}_{\rm sym} \; | \; 0 \preceq D \preceq I_M, \, \tr(D)=m \}.
$$
\begin{theorem}
\label{thm:convex_pbm}
Let $A \in \R^{M \times M}_{\rm sym}$ be a positive semidefinite matrix, and $C \in \R^{M \times M}_{\rm sym}$.
The set $\mathcal D_\star$ of the (global) minimizers of the convex optimization problem
\begin{equation} \label{eq:convex_opt}
   \widetilde I= \min_{D \in \mathcal K} \widetilde J(D)
\end{equation}
is a non-empty, compact, convex subset of $\mathcal K$, and we have the following properties:
\begin{enumerate}
    \item The function $\R^{M\times M}_{\rm sym} \ni D \mapsto \nabla \widetilde J(D)=C- \frac 12[[A,D],A] \in \R^{M \times M}_{\rm sym}$ takes a constant value $H_\star$ on $\mathcal D_\star$.
    \item Let $\mu_1 \le \cdots \le \mu_M$ be the eigenvalues of $H_\star$, counted with their multiplicities and ranked in non-decreasing order. Moreover, define 
    \begin{equation*}
        \widetilde {\mathcal D}_\star:= \left\{ D \in \mathcal K \; | \;  \mathds 1_{(-\infty,\mu_m)}(H_\star) \preceq D \preceq 1_{(-\infty,\mu_m]}(H_\star) \right\}.
    \end{equation*}
    Then
    \begin{equation} \label{eq:DsubsetDtilde}
        {\mathcal D}_\star \subset \widetilde {\mathcal D}_\star,
    \end{equation}
    and 
    \begin{align}
    \forall D_\star \in \mathcal D_\star, \quad \forall \widetilde D_\star \in \widetilde{\mathcal D}_\star, \quad \widetilde D_\star \in \mathcal D_\star \Leftrightarrow \;  [A,D_\star-\widetilde D_\star]=0 \;   \Rightarrow  \;  {\rm Ran}(D_*-\widetilde D_*) \subset \mathcal V, \label{eq:charac_D_star}
    \end{align}
    where $\mathcal V \subset \R^M$ is the largest $A$-invariant vector subspace of ${\rm Ker}(H_\star-\mu_m)$.
\item If $\mu_m < \mu_{m+1}$, then 
\begin{equation}
    P_\star:= \mathds 1_{(-\infty,\mu_m]}(H_\star)
\end{equation}
is the unique minimizer of \eqref{eq:convex_opt}, i.e., $\mathcal D_\star=\{P_\star\}$, and $P_\star$ is also the unique global minimizer of the original problem~\eqref{eq:OptProblem}--\eqref{eq:CostFunctionJ}.
\end{enumerate}
\end{theorem}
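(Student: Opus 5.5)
Existence, compactness and convexity of $\mathcal D_\star$ follow from the continuity and convexity of $\widetilde J$ on the compact convex set $\mathcal K$: the minimum is attained, and the set of minimizers of a convex function over a convex set is convex and closed in a compact set. The key structural fact is that $\widetilde J(D)=\tr(CD)+\frac14\|L(D)\|^2$ with $L(D)=[A,D]$ linear. For item 1, take $D_1,D_2\in\mathcal D_\star$. Along the segment between them $\widetilde J$ is constant, since it is convex and minimal at both ends. The restriction is a quadratic polynomial in $t$ with leading coefficient $\frac14\|[A,D_1-D_2]\|^2$, so $[A,D_1-D_2]=0$. Hence $\nabla\widetilde J(D)=C-\frac12[[A,D],A]$ is the same at $D_1$ and $D_2$. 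The same argument gives the direction ``$\widetilde D_\star\in\mathcal D_\star\Rightarrow[A,D_\star-\widetilde D_\star]=0$'' in \eqref{eq:charac_D_star}.

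For item 2, I would use the first-order optimality condition for convex minimization: $D_\star\in\mathcal D_\star$ if and only if $\tr(H_\star(D-D_\star))\ge0$ for all $D\in\mathcal K$. That is, $D_\star$ minimizes the linear form $D\mapsto\tr(H_\star D)$ over $\mathcal K$. The minimizers of a linear form over $\mathcal K$ are classical. The minimum value is $\mu_1+\dots+\mu_m$, and the minimizers are exactly the $D\in\mathcal K$ with $\mathds 1_{(-\infty,\mu_m)}(H_\star)\preceq D\preceq\mathds 1_{(-\infty,\mu_m]}(H_\star)$. I would prove this by writing $\tr(H_\star D)=\sum_k\mu_k\, x_k^TDx_k$ with diagonal weights in $[0,1]$ summing to $m$, and noting that equality forces full occupation below $\mu_m$ and zero occupation above, together with the operator bounds. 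This yields \eqref{eq:DsubsetDtilde}, and $\widetilde{\mathcal D}_\star$ is exactly the set of minimizers of $\tr(H_\star\cdot)$ over $\mathcal K$.

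For the converse direction in \eqref{eq:charac_D_star}, let $D_\star\in\mathcal D_\star$ and $\widetilde D_\star\in\widetilde{\mathcal D}_\star$ with $[A,D_\star-\widetilde D_\star]=0$. Set $\Delta=\widetilde D_\star-D_\star$. Expanding the quadratic gives
\[
\widetilde J(\widetilde D_\star)=\widetilde J(D_\star)+\tr(H_\star\Delta)+\tfrac14\|[A,\Delta]\|^2 .
\]
The quadratic term vanishes by assumption. The linear term vanishes because both matrices minimize $\tr(H_\star\cdot)$ over $\mathcal K$, so $\widetilde D_\star\in\mathcal D_\star$.

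For the final implication, $\Delta$ is a difference of two elements of $\widetilde{\mathcal D}_\star$. Both coincide with $\mathds 1_{(-\infty,\mu_m)}(H_\star)$ on the spectral subspace below $\mu_m$ and vanish above it, so ${\rm Ran}(\Delta)\subset{\rm Ker}(H_\star-\mu_m)$. Since $\Delta$ is symmetric and commutes with $A$, ${\rm Ran}(\Delta)$ is $A$-invariant. It is therefore contained in the largest $A$-invariant subspace $\mathcal V$ of that kernel, which exists because sums of $A$-invariant subspaces are $A$-invariant.

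For item 3, if $\mu_m<\mu_{m+1}$ then the two bounds defining $\widetilde{\mathcal D}_\star$ coincide and the trace constraint is automatic. Thus $\widetilde{\mathcal D}_\star=\{P_\star\}$, and since $\mathcal D_\star$ is a nonempty subset of it, $\mathcal D_\star=\{P_\star\}$ with $P_\star\in\mathcal M$. Then, using Lemma~\ref{lem:convexification} and $\mathcal M\subset\mathcal K$, every $P\in\mathcal M$ satisfies $J(P)=\widetilde J(P)\ge\widetilde J(P_\star)=J(P_\star)$, with strict inequality when $P\ne P_\star$ by uniqueness of the convex minimizer. This gives the unique global minimizer.

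The main obstacle I expect is the careful characterization of the minimizers of a linear form over $\mathcal K$ in the degenerate case, in particular the operator inequalities rather than just diagonal conditions. There I would use that $x^TDx=1$ with $D\preceq I$ forces $Dx=x$, and $x^TDx=0$ with $D\succeq0$ forces $Dx=0$.
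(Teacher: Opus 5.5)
Your proof is correct and takes essentially the same route as the paper. The Euler inequality places $\mathcal D_\star$ inside the set $\widetilde{\mathcal D}_\star$ of minimizers of $\tr(H_\star\,\cdot)$ over $\mathcal K$, and the exact expansion $\widetilde J(D_\star+\Delta)=\widetilde J(D_\star)+\tr(H_\star\Delta)+\frac14\|[A,\Delta]\|^2$ then yields item 1, the equivalence in \eqref{eq:charac_D_star}, and item 3. The only differences are minor: you actually justify the characterization of $\widetilde{\mathcal D}_\star$, which the paper merely asserts, and you obtain ${\rm Ran}(\Delta)\subset\mathcal V$ directly from the $A$-invariance of ${\rm Ran}(\Delta)$, whereas the paper simultaneously diagonalizes $\Delta$ and $A$.
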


\begin{proof}
    The function $\widetilde J$ is convex on $\R^{M \times M}_{\rm sym}$ and $\mathcal K$ is a compact convex subset of $\R^{M \times M}_{\rm sym}$. The set $\mathcal{D}_\star$  of minimizers of problem \eqref{eq:convex_opt} therefore is a non-empty compact convex subset of $\mathcal{K}$. Let $D_\star \in \mathcal{D}_\star$ be a (global) minimizer of~\eqref{eq:convex_opt}. Using convexity, one has, for any $D \in \mathcal{K}$,
\begin{equation}\label{eq:optimal_0}
\forall \theta \in [0,1], \quad \widetilde J((1-\theta) D_\star + \theta D) \geq \widetilde J(D_\star). \end{equation}
For small $\theta$, $\theta \geq 0$, one can expand \eqref{eq:optimal_0} to get
\begin{align*}
    \widetilde J((1-\theta) D_\star + \theta D)& = \widetilde J( D_\star + \theta (D-D_\star))= \widetilde J( D_\star) + \theta \langle\nabla \widetilde J( D_\star) , D-D_\star\rangle + o(\theta),
\end{align*}
yielding the Euler inequality
\begin{equation}\label{eq:euler_lagrange}
\forall D \in \mathcal{K}, \quad \tr(H_\star D) \ge  \tr( H_\star D_\star), \quad \mbox{with} \quad H_\star := \nabla \widetilde J( D_\star)=C- \frac 12[[A,D_\star],A].
\end{equation}
Denoting by  $\mu_1 \le \cdots \le \mu_M$ the eigenvalues of $H_\star$, we have 
$$
    \min_{D \in \mathcal{K} }\tr(H_\star D) = \min_{D \in \mathcal{M}}\tr(H_\star D) = \sum_{k = 1}^{m} \mu_{k},
$$
and
$$
D_\star \in \widetilde {\mathcal D}_\star:= \mathop{\rm argmin}_{D \in \mathcal K}\tr(H_\star D)=\left\{ D \in \mathcal K \; | \;  \mathds 1_{(-\infty,\mu_m)}(H_\star) \preceq D \preceq 1_{(-\infty,\mu_m]}(H_\star) \right\}.
$$
Let $\widetilde D_\star$ be another minimizer of~\eqref{eq:convex_opt}. By convexity, we have that
\begin{align}
\forall \theta \in [0,1], \quad \widetilde I &= \widetilde J((1-\theta) D_\star + \theta \widetilde D_\star)  = \widetilde J(D_\star +\theta(\widetilde D_\star- D_\star)) \nonumber \\
&= \widetilde I + \theta \tr(H_\star ( \widetilde D_\star-D_\star)) +\frac{\theta^2}4 \lVert [A,\widetilde D_{\star} - D_\star] \rVert^2. \label{eq:sec_order_conv}
\end{align}
This implies $\tr(H_\star ( \widetilde D_\star-D_\star))=0$ which yields $\widetilde D_* \in \widetilde {\mathcal D}_\star$ and therewith \eqref{eq:DsubsetDtilde}. Moreover, \eqref{eq:sec_order_conv} implies $[A,\widetilde D_{\star} - D_\star]=0$, which yields
$$
\nabla \widetilde J(\widetilde D_*) = C-  \frac 12[[A,\widetilde D_\star],A] =  C-  \frac 12[[A,D_\star],A] = H_\star,
$$
proving the first assertion of the theorem.

\medskip

Let $D_\star \in \mathcal D_\star$ and $\widetilde D_\star \in \widetilde{\mathcal D}_\star$. As $\tr(H_\star D_\star) = \tr(H_\star \widetilde D_\star)$, we deduce from~\eqref{eq:sec_order_conv} that $\widetilde D_\star \in \mathcal D_\star$ if and only if $[A,D_\star-\widetilde D_\star]=0$. This proves the equivalence in~\eqref{eq:charac_D_star}. To prove the implication, we observe that $\widetilde D_\star$ is in $\widetilde{\mathcal D}_\star$ if and only if ${\rm Ran}(\widetilde D_\star-D_\star) \subset {\rm Ker}(H_\star-\mu_m)$ and $[A,\widetilde D_\star-D_\star]=0$, that is if and only if there exists a family $(u_1,\cdots,u_d)$ of vectors in ${\rm Ker}(H_\star-\mu_m)$ such that
$$
\widetilde D_\star-D_\star = \sum_{j=1}^{d} n_j u_ju_j^T 
$$
with
$$
d \le {\rm dim}({\rm Ker}(H_\star-\mu_m)), \quad n_j \in [-1,1] \setminus \{0\}, \quad \sum_{j=1}^d n_j=0, \quad Au_j= \alpha_j u_j, \quad u_j^Tu_k=\delta_{jk}.
$$
This condition implies that ${\rm Ran}(D_*-\widetilde D_*) = \mbox{Span}(u_1,\cdots,u_d) \subset \mathcal V$, where $\mathcal V \subset \R^M$ is the largest $A$-invariant vector subspace of ${\rm Ker}(H_\star-\mu_m)$. This completes the proof of the second assertion.

\medskip

The third assertion readily follows from~\eqref{eq:DsubsetDtilde}, the fact that if $\mu_m < \mu_{m+1}$, then $\widetilde {\mathcal D}_\star$ is the singleton $\{1_{(-\infty,\mu_m](H_\star)}\}$, and Lemma~\ref{lem:convexification}.
\end{proof}

\medskip

\begin{remark} Consider the spectral decomposition 
$$
A = \sum_{i=1}^p \alpha_i' P_i, \quad \alpha_1' < \alpha_2' < \cdots < \alpha_p', \quad P_i \in \R^{M \times M}_{\rm sym}, \quad P_iP_j=\delta_{ij} P_i, \quad \sum_{i=1}^p P_i = I_{\R^M},
$$
of $A$ (i.e., $\alpha_1', \cdots, \alpha_p'$ are the {\em distinct} eigenvalues of $A$). Setting $\mathcal V_i:={\rm Ran}(P_i) \cap {\rm Ker}(H_*-\mu_m)$ and $d_i:={\rm dim}(\mathcal V_i)$, we have
$$
\mathcal V=\bigoplus_{1 \le i \le p \; : \; d_i \ge 1} \mathcal V_i.
$$
\end{remark}

\section{Special cases}
\label{sec:special}

In this section, we first investigate the case when $A$ and $B$ commute (Section~\ref{sec:ABcommute}), or ``almost'' commute, in the sense that the commutator of those matrices is small. In the latter case, an approximate solution can be obtained by perturbation theory (Section~\ref{sec:perturbation}). We finally focus in Section~\ref{sec:DMET} on the case which originally motivated our study of the optimization problem~\eqref{eq:OptProblem}--\eqref{eq:CostFunctionJ}, that is the case when $A$ and $B$ are constructed from sub-blocks of some matrix $\gamma \in \R^{L \times L}_{\rm sym}$ such that $0 \preceq \gamma \preceq 1$.

\subsection{Explicit solution when {\it A} and {\it B} commute}
\label{sec:ABcommute}

In the case when $A$ and $B$ commute, problem~\eqref{eq:OptProblem}--\eqref{eq:CostFunctionJ} reduces to the diagonalization of a symmetric matrix.

\begin{proposition}[Explicit solutions when $A$ and $B$ commute]
\label{prop:AB_commute}
Let $A ,B \in \R^{M \times M}_{\rm sym}$ with $A \succeq 0$ and $[A,B]=0$.  As above, we set 
$$
C := B - \frac 12 A^2,
$$
and denote by $c_1 \le c_2 \le \cdots \le c_M$ the eigenvalues of $C$ ranked in non-decreasing order. The minimizers of~\eqref{eq:OptProblem}--\eqref{eq:CostFunctionJ} are given by
\begin{equation}\label{eq:Pstar_commuting_case}
      P_\star = \mathbf{1}_{(-\infty,c_m)}\!\left( C\right) +\Delta, \quad \Delta^2 = \Delta=\Delta^*, \quad \mathrm{Ran}(\Delta) \subset \mathrm{Ker}\left( C-c_m\right), \quad \tr(P_\star)=m, \quad [A,\Delta]=0.
\end{equation}
In particular, if $c_m < c_{m+1}$, $P_\star:=\1_{(-\infty,c_m]}(C)$ is the unique minimizer of~\eqref{eq:OptProblem}--\eqref{eq:CostFunctionJ} and the matrices $A$, $B$, $C$, and $P_\star$ pairwise commute.
\end{proposition}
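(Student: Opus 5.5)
The plan is to use the convexification of Section~\ref{sec:convexification} directly. Since $[A,B]=0$, the matrices $A$, $B$ and $C=B-\frac12A^2$ pairwise commute. The first step is to show that any $D\in\mathcal K$ with $[A,D]=0$ and $D\in\mathop{\rm argmin}_{D\in\mathcal K}\tr(CD)$ minimizes $\widetilde J$ over $\mathcal K$. This holds because $\widetilde J(D)=\tr(CD)+\frac14\|[A,D]\|^2\ge \tr(CD)\ge \sum_{k\le m}c_k$ for every $D\in\mathcal K$. Moreover, since $A$ and $C$ commute, they can be diagonalized in a common orthonormal basis. The projector $\1_{(-\infty,c_m)}(C)+\Delta_0$, with $\Delta_0$ spanned by common eigenvectors in ${\rm Ker}(C-c_m)$ chosen so that the trace equals $m$, commutes with $A$ and attains the lower bound $\sum_{k\le m}c_k$. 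Hence
\[
\widetilde I=\min_{D\in\mathcal K}\widetilde J(D)=\sum_{k=1}^m c_k,
\]
and this value is attained on $\mathcal M$.

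The second step characterizes the global minimizers of the original problem. By Lemma~\ref{lem:convexification}, $J=\widetilde J$ on $\mathcal M\subset\mathcal K$. Therefore $\min_{\mathcal M}J\ge\widetilde I$, and the projector from the first step shows that equality holds. So $P\in\mathcal M$ is a global minimizer if and only if $\widetilde J(P)=\widetilde I$. By the chain of inequalities above, this happens if and only if both $\tr(CP)=\sum_{k\le m}c_k$ and $[A,P]=0$.

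The third step translates these two conditions into \eqref{eq:Pstar_commuting_case}. The condition $\tr(CP)=\sum_{k\le m}c_k$ for an orthogonal projector of rank $m$ is the standard Ky Fan equality case: it holds if and only if $\1_{(-\infty,c_m)}(C)\preceq P\preceq \1_{(-\infty,c_m]}(C)$. This is exactly the description of $\widetilde{\mathcal D}_\star$ in Theorem~\ref{thm:convex_pbm}, now with $H_\star$ replaced by $C$. For a projector, this means $P=\1_{(-\infty,c_m)}(C)+\Delta$, where $\Delta$ is a projector with range in ${\rm Ker}(C-c_m)$; here one uses that $P$ commutes with the spectral projectors of $C$ because it is sandwiched between them. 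Since $A$ commutes with $\1_{(-\infty,c_m)}(C)$, the condition $[A,P]=0$ is equivalent to $[A,\Delta]=0$. The trace condition fixes $\tr(\Delta)$. Conversely, any such $P$ satisfies both conditions, so the characterization is complete.

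Finally, if $c_m<c_{m+1}$, then ${\rm Ker}(C-c_m)$ is contained in ${\rm Ran}\,\1_{(-\infty,c_m]}(C)$, and the trace constraint forces $\Delta=\1_{\{c_m\}}(C)$. Hence $P_\star=\1_{(-\infty,c_m]}(C)$ is the unique minimizer. As a function of $C$, it commutes with $C$, with $A$ and with $B$. I expect the only delicate point to be the rigorous justification of the sandwich step, namely that a projector $P$ with $\1_{(-\infty,c_m)}(C)\preceq P\preceq\1_{(-\infty,c_m]}(C)$ splits as stated. I would prove it by noting that $Q\preceq P\preceq R$ with $Q,R$ projectors forces ${\rm Ran}\,Q\subset{\rm Ran}\,P\subset{\rm Ran}\,R$, and then taking $\Delta=P-Q$.
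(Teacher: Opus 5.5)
Your proof is correct and is essentially the paper's argument in coordinate-free form. The paper diagonalizes $A$ and $B$ jointly and writes $J_U(Q)=\sum_i c_iQ_{ii}+\frac14\sum_{i,j}(a_i-a_j)^2Q_{ij}^2$, which is exactly $\tr(CQ)+\frac14\|[A,Q]\|^2$; it then minimizes the two terms separately, as you do, and your detour through $\mathcal K$ is harmless but unnecessary, since the bound $\tr(CP)\ge\sum_{k\le m}c_k$ on $\mathcal M$ already suffices. One small wording fix: in the gapped case, what forces $\Delta=\1_{\{c_m\}}(C)$ is that $\dim{\rm Ker}(C-c_m)=m-\tr\,\1_{(-\infty,c_m)}(C)=\tr(\Delta)$, not the inclusion ${\rm Ker}(C-c_m)\subset{\rm Ran}\,\1_{(-\infty,c_m]}(C)$, which holds trivially.
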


\begin{proof} Problem~\eqref{eq:OptProblem}--\eqref{eq:CostFunctionJ} is invariant under the action of the orthogonal group $O(M)$ in the sense that if $P_*$ is a minimizer of $J$ on $\mathcal M$ and $U \in O(M)$, then $Q_*:=UP_*U^T$ and is a minimizer of $J_U$ on $\mathcal M$ where
$$
 J_U(Q) := \text{Tr}((UBU^T)Q)- \frac{1}{2} \text{Tr}((UAU^T) Q(UAU^T) Q) .
$$
Since $A$ and $B$ commute, 
there exists $U \in O(M)$ such that
$$
UAU^T = \text{diag}(a_{1}, \dots, a_{M}), \quad UBU^T = \text{diag}(b_{1}, \dots, b_{M}),
$$
and thus 
$$
 UCU^T = \text{diag}(c_{1}, \dots, c_{M}) \quad \mbox{with} \quad c_j = b_j-\frac{a_j^2}2.
$$
After a suitable reordering of the columns of $U$, we may assume without loss of generality that $c_1 \le c_2 \le \cdots \le c_M$. Since $Q_{ij}=Q_{ji}$ and $\sum_{j=1}^M Q_{ij}^2 = Q_{ii}$ for all $Q \in \mathcal M$, we thus have for this choice of~$U$, 
\begin{align*}
         J_U(Q) & = \sum_{i = 1}^{M} b_i Q_{ii} - \frac{1}{2} \sum_{i,j=1}^{M} a_{i} a_{j} Q_{ij}^2  = \sum_{i = 1}^{M} b_i Q_{ii} + \frac{1}{4} \sum_{i,j=1}^{M}( (a_i -a_j)^2 - a_{i}^2 - a_{j}^2) Q_{ij}^2 \\
         &= \sum_{i = 1}^{M} b_i Q_{ii} -  \frac 12 \sum_{i=1}^M a_{i}^2 \sum_{j=1}^M Q_{ij}^2  + \frac{1}{4} \sum_{i,j=1}^M  (a_i -a_j)^2 Q_{ij}^2  = \sum_{i = 1}^{M} c_iQ_{ii} + \frac{1}{4} \sum_{i,j=1}^{M} (a_i -a_j)^2Q_{ij}^2.
         \end{align*}
Consequently, the cost function $J_U$ reaches a minimum if $Q_{ij} = 0$ whenever $a_i \neq a_j$, and 
\begin{equation*}
Q_{ii} = 
\left\lbrace 
\begin{aligned}
1,~{\rm if}~c_i < c_m,  \\
0,~{\rm if}~c_i > c_m.
\end{aligned}
\right.
\end{equation*}
Since $Q^2=Q=Q^T$, the minimizers $Q_*$ of $J_U$ on $\mathcal M$ are given by
\begin{equation} \label{eq:decomp_Pstar}
      Q_\star =  \left( \begin{array}{ccc} I_k & 0 & 0 \\ 0 & 0 & 0 \\ 0 & 0 & 0 \end{array} \right) + \Delta_\star \quad \mbox{with} \quad \Delta_\star =  \left( \begin{array}{ccc} 0 & 0 & 0 \\ 0 & \delta & 0 \\ 0 & 0 & 0 \end{array} \right) \end{equation}
      where
      $$
      \quad k:=\max\{j \; : \; c_j < c_m\}, \quad \delta^2 = \delta=\delta^*, \quad \tr(\delta)=m-k, \quad [\delta,\text{diag}(a_{k+1}, \dots, a_{k+p})]=0,
      $$
       $p$ denoting the multiplicity of the eigenvalue $c_m$.
Multiplying $Q_\star$ by $U^T$ on the left and $U$ on the right, we obtain~\eqref{eq:Pstar_commuting_case}. 

\medskip

\noindent
Finally, if $c_m < c_{m+1}$, we have that $k=m-1$, $\Delta = \1_{\{c_m\}}(C)$ and therefore $P_\star:=\1_{(-\infty,c_m]}(C)$. The matrix $\delta$ in the decomposition \eqref{eq:decomp_Pstar} is thus equal to the $1 \times 1$ matrix $[1]$. After multiplying $Q_\star$ by $U^T$ on the left and $U$ on the right, we obtain that the matrices $A$, $B$, $C$ and $P_\star$ pairwise commute.
\end{proof}

\subsection{Perturbation analysis}
\label{sec:perturbation}

We now investigate the case when $[A,B]$ is ``small''. To that end, we consider two commuting matrices $A_0$ and $B_0$ in $\R^{M \times M}_{\rm sym}$ with $A_0 \succeq 0$, and two $\R^{M \times M}_{\rm sym}$-valued functions $\epsilon \mapsto A^\epsilon$ and  $\epsilon \mapsto B^\epsilon$ real-analytic in a small neighborhood of $\epsilon=0$ in $\R$ and such that $A^0=A_0$ and $B^0=B_0$. We then consider the family of Grassmann optimization problems
\begin{equation} \label{eq:pertubCost_comm}
I^\epsilon := \min_{P \in \mathcal M} J^\epsilon(P) \quad \mbox{with} \quad J^\epsilon(P):= \tr(B^\epsilon P) - \frac 12 \tr(A^\epsilon P A^\epsilon P).
\end{equation}
Let $c_1^0 \le c_2^0 \le \cdots \le c_M^0$ be the eigenvalues of $C_0:=B_0-\frac 12 (A_0)^2$, counting multiplicities. Under the condition that $c_m^0 < c_{m+1}^0$,
we know from Proposition~\ref{prop:AB_commute} that for $\epsilon=0$, $P_0:=\1_{(-\infty,c_m^0]}(C_0)$ is the unique global minimizer of~\eqref{eq:pertubCost_comm} and that the matrices $A_0$, $B_0$, $C_0$ and $P_0$ pairwise commute. Let $U_0 \in O(M)$ be such that
\begin{align*}
& A_0 = U_0\text{diag}(a_{1}^0, \dots, a_{M}^0) U_0^T, \quad B_0 = U_0\text{diag}(b_{1}^0, \dots, b_{M}^0) U_0^T, \\
 & C_0 = U_0\text{diag}(c_{1}^0, \dots, c_{M}^0)U_0^T \quad \mbox{with} \quad c_j^0 = b_j^0-\frac{(a_j^0)^2}2, \quad  \mbox{and} \quad P_0 = U_0 \begin{pmatrix} I_m & 0 \\ 0 & 0 \end{pmatrix} U_0^T.
\end{align*}
Expanding $A^\epsilon$, $B^\epsilon$, and $C^\epsilon := B^\epsilon - \frac 12 (A^\epsilon)^2$ as
$$
A^\epsilon = \sum_{n=0}^{+\infty} \epsilon^n A_n, \quad B^\epsilon = \sum_{n=0}^{+\infty} \epsilon^n B_n, \quad \mbox{and} \quad C^\epsilon = \sum_{n=0}^{+\infty} \epsilon^n C_n, \quad A_n,B_n,C_n \in \R^{M \times M}_{\rm sym}, 
$$
and using the decompositions 
$$
A_1= U_0 \begin{pmatrix}
             A_1^{\rm oo} &  A_1^{\rm ov} \\ A_{1}^{\rm vo} & A_1^{\rm vv}
         \end{pmatrix} U_0^T \quad \mbox{and} \quad B_1= U_0 \begin{pmatrix}
             B_1^{\rm oo} &  B_1^{\rm ov} \\ B_{1}^{\rm vo} & B_1^{\rm vv}
         \end{pmatrix} U_0^T,
         $$
we have the following result.

\medskip

\begin{proposition} If $c_m^0 < c_{m+1}^0$, there exists $\epsilon_0 > 0$ such that for all $\epsilon \in (-\epsilon_0,\epsilon_0)$, problem~\eqref{eq:pertubCost_comm} has a unique global minimizer $P^\epsilon$, and the function $(-\epsilon_0,\epsilon_0) \ni \epsilon \mapsto P^\epsilon \in \R^{M \times M}_{\rm sym}$ is real-analytic. We have for $|\epsilon|$ small enough
\begin{equation} \label{eq:expantion_Pepsilon}
P^\epsilon = \sum_{n=0}^{+\infty} \epsilon^n P_n \quad \mbox{with} \quad P_0= U_0  \begin{pmatrix}
             I_m & 0 \\ 0 & 0
         \end{pmatrix} U_0^T \quad \mbox{and} \quad P_1=  U_0 \begin{pmatrix}
             0 & P_{1}^{\rm ov}\\ P_{1}^{\rm vo} & 0
         \end{pmatrix} U_0^T,
\end{equation}
with 
\begin{equation}\label{eq:P1ov}
\forall 1 \le i \le m, \quad 1 \le j \le M-m, \quad (P_{1}^{\rm ov})_{ij} = \frac{(B_{1}^{\rm ov})_{ij} - a^0_{i} (A_{1}^{\rm ov})_{ij}}{c_{m+j}^0-c_i^0 +\frac 12 (a_{m+j}^0-a_i^0)^2}.
\end{equation}
 \end{proposition}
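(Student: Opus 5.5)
The plan is to combine a compactness argument, which shows that global minimizers stay close to $P_0$, with the implicit function theorem applied to the first-order condition of Proposition~\ref{Prop:commutation1storder} in a local chart of $\mathcal M$ around $P_0$. The expansion~\eqref{eq:P1ov} is then read off by differentiating the Euler--Lagrange equation $[G^\epsilon(P^\epsilon),P^\epsilon]=0$ at $\epsilon=0$.

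\textbf{Step 1: nondegeneracy of the Hessian at $P_0$.} I will evaluate the quadratic form of Proposition~\ref{Prop:commutation1storder} for $J^0$ at $P_0$, in the basis given by $U_0$. Write $a_i=a_i^0$ and $b_i=b_i^0$. In this basis $A_0$ is diagonal, so $A_{\rm vo}=0$ and $A_{\rm oo}X_{\rm ov}A_{\rm vv}$ has entries $a_ia_{m+j}x_{ij}$. Moreover $G_0(P_0)=B_0-A_0P_0A_0$ is diagonal, with entries $g_i=b_i-a_i^2$ for $i\le m$ and $g_{m+j}=b_{m+j}$. The Hessian is therefore diagonal in the entries $x_{ij}$ of $X_{\rm ov}$, with coefficients
$$
g_{m+j}-g_i-a_ia_{m+j}=c^0_{m+j}-c^0_i+\tfrac12(a_{m+j}-a_i)^2 .
$$
These coefficients are positive because $c^0_{m+j}\ge c^0_{m+1}>c^0_m\ge c^0_i$. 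Hence $\mbox{Hess}_{\mathcal M}J^0(P_0)$ is positive definite. This coefficient is exactly the denominator in~\eqref{eq:P1ov}.

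\textbf{Step 2: existence, uniqueness and analyticity.} I will use the analytic chart $X\mapsto e^{\Omega(X)}P_0e^{-\Omega(X)}$, where $X\in\mathcal T_{P_0}\mathcal M$ and $\Omega(X)=[X,P_0]$ is the antisymmetric generator. In this chart I write $F(\epsilon,X):=\mathrm{grad}_{\mathcal M}J^\epsilon$ at the image point. The map $F$ is real-analytic, satisfies $F(0,0)=0$, and $\partial_XF(0,0)$ is the invertible Hessian from Step 1. The analytic implicit function theorem then gives a unique analytic branch $X(\epsilon)$ of critical points near $P_0$. By continuity, these critical points are strict local minimizers for small $|\epsilon|$.

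It remains to show that they are \emph{global} minimizers, and this is the step I expect to need the most care. I will argue by contradiction using compactness of $\mathcal M$. Pick global minimizers $Q^\epsilon$ of $J^\epsilon$. Since $J^\epsilon\to J^0$ uniformly on $\mathcal M$, every cluster point of $Q^\epsilon$ as $\epsilon\to0$ is a global minimizer of $J^0$, and by Proposition~\ref{prop:AB_commute} the only one is $P_0$. So $Q^\epsilon$ lies in the implicit-function neighbourhood for $|\epsilon|<\epsilon_0$. Being critical points, the $Q^\epsilon$ must then coincide with the branch, which gives uniqueness and analyticity of $P^\epsilon$.

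\textbf{Step 3: first-order term.} I will expand $P^\epsilon=P_0+\epsilon P_1+O(\epsilon^2)$. Differentiating $(P^\epsilon)^2=P^\epsilon$ gives $P_1P_0+P_0P_1=P_1$, so $P_1$ is tangent, i.e.\ purely off-diagonal. Next I expand $[G^\epsilon(P^\epsilon),P^\epsilon]=0$ to order $\epsilon$:
$$
[G_0(P_0),P_1]+[B_1-A_1P_0A_0-A_0P_0A_1-A_0P_1A_0,\,P_0]=0 .
$$
Taking the ${\rm ov}$ block, I use $(A_1P_0A_0)_{\rm ov}=0$, $(A_0P_0A_1)_{ij}=a_i(A_1)_{ij}$ and $(A_0P_1A_0)_{ij}=a_ia_{m+j}(P_1)_{ij}$. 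This yields
$$
\bigl(g_{m+j}-g_i-a_ia_{m+j}\bigr)(P_1^{\rm ov})_{ij}=(B_1^{\rm ov})_{ij}-a_i(A_1^{\rm ov})_{ij},
$$
and combining this with the identity from Step 1 gives~\eqref{eq:P1ov}.
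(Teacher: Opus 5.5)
Your Steps 1 and 2 are sound, and differ only mildly from the paper. The paper localizes the global minimizers quantitatively, via $\|P^\epsilon-P_0\|\le\sqrt{2c|\epsilon|}$, which it derives from $\widetilde J^\epsilon\ge\tr(C^\epsilon\,\cdot\,)$ and the gap of $C_0$. You instead use a soft compactness argument plus uniqueness of the minimizer of $J^0$ (Proposition~\ref{prop:AB_commute}). The paper also applies the implicit function theorem directly to the vector field $(\epsilon,P)\mapsto[P,[P,G^\epsilon(P)]]$ on $\mathcal M$, rather than in an exponential chart. In your chart, $\mathrm{grad}_{\mathcal M}J^\epsilon$ takes values in the moving space $\mathcal T_{P(X)}\mathcal M$. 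Before invoking the implicit function theorem you should pull it back to the fixed space $\mathcal T_{P_0}\mathcal M$, e.g.\ by taking the Euclidean gradient of $X\mapsto J^\epsilon(e^{\Omega(X)}P_0e^{-\Omega(X)})$. Both routes give existence, uniqueness and analyticity. Your derivation of the off-diagonal form of $P_1$ from $(P^\epsilon)^2=P^\epsilon$ is also correct.

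The genuine problem is the last identity of Step 3, which has the wrong sign. In the $U_0$ basis, for any $Y$ one has $(YP_0)_{\rm ov}=0$ and $(P_0Y)_{\rm ov}=Y_{\rm ov}$, so $[Y,P_0]_{\rm ov}=-Y_{\rm ov}$. Meanwhile $[G_0(P_0),P_1]_{\rm ov}$ has entries $(g_i-g_{m+j})(P_1^{\rm ov})_{ij}$. The ${\rm ov}$ block of your (correct) linearized equation is therefore
\[
(g_i-g_{m+j})(P_1^{\rm ov})_{ij}-(B_1^{\rm ov})_{ij}+a_i(A_1^{\rm ov})_{ij}+a_ia_{m+j}(P_1^{\rm ov})_{ij}=0,
\]
that is,
\[
(P_1^{\rm ov})_{ij}=-\,\frac{(B_1^{\rm ov})_{ij}-a_i(A_1^{\rm ov})_{ij}}{c^0_{m+j}-c^0_i+\frac12(a^0_{m+j}-a^0_i)^2}.
\]
This is what one expects at a nondegenerate minimum: the first-order shift is minus the inverse Hessian applied to the $\epsilon$-derivative of the gradient. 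So \eqref{eq:P1ov} as stated is off by an overall sign and is false in general. The paper's own proof writes the linearized equation with the correct minus sign on the right-hand side, then drops it in the final formula.

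A quick check: take $M=2$, $m=1$, $A^\epsilon=0$ and $B^\epsilon=\begin{pmatrix}0&\epsilon\\ \epsilon&1\end{pmatrix}$. Then $P^\epsilon$ is the projector onto the lowest eigenvector of $B^\epsilon$, which is proportional to $(1,-\epsilon+O(\epsilon^3))^T$. Hence $P_1^{\rm ov}=-1$, whereas \eqref{eq:P1ov} gives $+1$. You should carry the block computation through and state the result with the minus sign, rather than forcing agreement with the displayed formula.
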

 
\begin{proof} The manifold $\mathcal M$ is compact and $J^\varepsilon\in C^\infty(\mathcal M)$,
hence $J^\varepsilon$ admits at least one global minimizer $P^\epsilon$ on $\mathcal M$, which is also a global minimizer of $\widetilde J^\epsilon$ on $\mathcal M$, where
$$
\forall P \in \R^{M \times M}_{\rm sym}, \quad \widetilde J^\epsilon(P) := \tr(C^\epsilon P) + \frac 14 \|[A^\epsilon,P]\|^2,
$$
since $\widetilde J^\epsilon$ and $J^\epsilon$ agree on $\mathcal M$. As $\epsilon \mapsto A^\epsilon$  and $\epsilon \mapsto C^\epsilon$ are real-analytic on some interval $(-\widetilde \epsilon_0,\widetilde\epsilon_0)$ and $\mathcal M$ is compact, there exists $c \in \R_+$ such that for all $\epsilon \in (-\widetilde\epsilon_0,\widetilde\epsilon_0)$, we have on the one hand
\begin{align*}
\widetilde J^\epsilon(P_0) & \ge \widetilde J^\epsilon(P^\epsilon) \ge  \tr(C^\epsilon P^\epsilon) \ge \tr(C^0 P^\epsilon) - c |\epsilon|  \ge \widetilde J^0(P_0)+ (c^0_{m+1}-c^0_m) \|P^\epsilon-P_0\|^2 - c\epsilon,
\end{align*}
and on the other hand
$$
\widetilde J^\epsilon(P_0) \le \widetilde J^0(P_0) + c |\epsilon|.
$$
If follows that 
\begin{equation} \label{eq:Pepsilon-P0}
\forall \epsilon \in (-\widetilde\epsilon_0,\widetilde\epsilon_0), \quad \|P^\epsilon-P_0\| \le \sqrt{2c \epsilon}.
\end{equation}
Let us introduce the function 
\begin{equation}
F: (-\varepsilon_0,\varepsilon) \times \mathcal M \ni (\epsilon,P) \mapsto F(\epsilon,P):=[P,[P,B^\epsilon-A^\epsilon P A^\epsilon]] \in T\mathcal M,
\end{equation}
which satisfies $F(\epsilon,P) \in T_P\mathcal M$ for all  $(\epsilon,P)  \in (-\varepsilon_0,\varepsilon) \times \mathcal M$. The function $F$ is real-analytic and we have $F(0,P_0)=0$ and 
$$
d_{(0,P_0)} F|_{T_{P_0}\mathcal M} \to T_{0}(T_{P_0}\mathcal M) \equiv T_{P_0}(\mathcal M) 
$$
is invertible. Indeed, we have that
\begin{align}
&\forall Q = U_0 \begin{pmatrix} 0 & X_{\rm vo} \\ X_{\rm ov} & 0 \end{pmatrix} U_0^T \in T_{P_0}\mathcal M, \nonumber \\
& d_{(0,P_0)} F|_{T_{P_0}\mathcal M} (Q) = [P_0,[Q,B_0-A_0P_0A_0]]+[P_0,[P_0,B_0-A_0QA_0]] = U_0 \begin{pmatrix} 0 & Y_{\rm vo} \\ Y_{\rm ov} & 0 \end{pmatrix} U_0^T, \label{eq:IFT_cond}
\end{align}
with
\begin{equation}\label{eq:IFT_cond2}
\forall 1 \le i \le m, \quad \forall 1 \le j \le M-m, \quad [Y_{\rm ov}]_{ij} = \left( \underbrace{c_{m+j}^0-c_i^0 +\frac 12 (a_{m+j}^0-a_i^0)^2}_{> 0} \right) [X_{\rm ov}]_{ij} .
\end{equation}
It follows from~\eqref{eq:Pepsilon-P0} and the real-analytic version of the implicit function theorem for parameter-dependent vector fields on analytic manifolds that there exists $\epsilon_0 > 0$ such that for all $\epsilon \in (-\epsilon_0,\epsilon_0)$, $J^\varepsilon$ has a unique global minimizer $P^\epsilon$ on $\mathcal M$, that the function $\epsilon \mapsto P^\epsilon$ is real analytic on $(-\epsilon_0,\epsilon_0)$ and that the matrix $P_1$ in \eqref{eq:expantion_Pepsilon} satisfies
$$
 [P_0,[P_1,B_0-A_0P_0A_0]]+[P_0,[P_0,B_0-A_0P_1A_0]] = - [P_0,[P_0,B_1-A_1P_0A_0-A_0P_0A_1]].
$$
In view of~\eqref{eq:IFT_cond2}, we get~\eqref{eq:P1ov}.
\end{proof}

\subsection{DMET bath-construction problem}
\label{sec:DMET}

Density Matrix Embedding Theory (DMET), introduced by Knizia and Chan, is a static quantum embedding method based on the idea that the entanglement between a local fragment and its environment is effectively low rank. This observation leads to a singular value decomposition of a reference state, producing a minimal bath that effectively captures the fragment-environment interaction and reduces the full many-body problem to a self-consistent finite-dimensional embedding~\cite{DMET2012,DMET2013,bulik2014density,kretchmer2018real,wouters2016practical}. 
Several works have modified the original DMET fixed-point formulation, reformulating or replacing the correlation-potential optimization to improve numerical robustness and expose the structure of the embedding map~\cite{wu2019projected,wu2020enhancing,lin2022variational,faulstich2022pure}. These variants define alternative nonlinear projection maps on reduced density matrices whose stability, convexity properties, and convergence behavior differ in essential ways from the original scheme. 
The DMET bath construction can be viewed not merely as an SVD-based compression, but as a structured unitary transformation acting on the one-particle Hilbert space~\cite{negre2025DMET}. In particular, Householder and Block-Householder transformations recast bath generation as a parametrized unitary rotation, enabling ensemble or multi-state embeddings and allowing additional optimality or physically motivated constraints to be imposed on the bath orbitals~\cite{Sekaran2021,yalouz2022quantum,Sekaran2022,Marecat2023,cernatic2024fragment}.
Beyond the standard one-body DMET matching principle, several extensions enrich the information content of the embedding to better capture fragment-environment correlations. Representative directions include incorporating higher-order fluctuations and screening physics~\cite{booth2015spectral,fertitta2018rigorous,Fertitta2019,sriluckshmy2021fully,scott2021extending,nusspickel2022systematic} and introducing correlated bath wavefunctions~\cite{tsuchimochi2015density}.The purpose of this section is to explain how the bath-construction problem encountered in the extension of DMET introduced in~\cite{negre2025DMET} reduces to problem~\eqref{eq:OptProblem}--\eqref{eq:CostFunctionJ}.\\

Consider an $N$-body fermionic quantum system with associated one-particle state space $\mathcal{H}$. For simplicity, we assume that $\mathcal H$ is of finite-dimension $L$. This is the case in practice in quantum chemistry since the electronic Hamiltonian is discretized in a finite basis of atomic orbitals (or other kinds of basis functions), as well as for lattice models with finite numbers of sites. We also assume that the system is non-magnetic and non-relativistic, which allows us to work with a {\rm real} Hilbert space $\mathcal H$. The most common problem in quantum chemistry is to compute the properties of the ground state of the electronic Hamiltonian $H_N$ of the system under study. In mathematical terms, this amounts to computing quantities of interest (e.g., electronic density, interatomic forces, etc.) derived from the normalized eigenfunction $\Psi_N^0$ of $H_N$ associated with its lowest eigenvalue $E_N^0$, referred to as the ground-state wave function and ground-state energy, respectively. 
The major difficulty is that $H_N$ is a self-adjoint operator acting on the fermionic $N$-particle state space $\mathcal H_N:=\bigwedge^N \mathcal H$, the antisymmetrized tensor product of $N$ copies of $\mathcal H$. Since ${\rm dim}(\mathcal H_N) = {L \choose N}$, computing the $N$-electron ground-state energy and wave function amounts to finding the lowest eigenpair of an ${L \choose N} \times {L \choose N}$ real symmetric matrix. The electronic problem therefore suffers from the curse of dimensionality, making it numerically intractable for large values of $L$ and $N$.

\medskip

DMET is an iterative method for computing approximations of some entries of the ground-state one-particle reduced density matrix (1-RDM), a key quantity in quantum physics. DMET can be interpreted as a domain decomposition method for the quantum many-body problem (see~\cite{CFKLL2025,negre2025DMET} for more details). Its works as follows. First, the system is partitioned into $N_{\rm f}$ non-overlapping fragments on the basis of physical arguments, leading to an orthogonal decomposition of $\mathcal H$, i.e.
\begin{equation}\label{eq:fragmentation}
\mathcal H = \mathcal H_{{\rm frag},1} \oplus \cdots \oplus \mathcal H_{{\rm frag},N_{\rm f}}, \quad {\rm dim}(\mathcal H_{{\rm frag},x})=\ell_x, \quad \sum_{x=1}^{N_{\rm f}} \ell_x=L.
\end{equation}
At each DMET iteration, the state of the whole system is represented by a coarse descriptor. In conventional DMET, this coarse descriptor is a Slater-type 1-RDM, i.e. a rank-$N$ orthogonal projector corresponding to a unique $N$-body state represented in $\mathcal H_N$ by a Slater determinant. The set of such 1-RDMs is thus
\begin{align*}
{\Gamma}_{\rm Slater}:&=\{ \gamma \in {\mathcal S}(\mathcal H) \; | \; \gamma^2=\gamma, \; \tr(\gamma)=N\},
\end{align*}
where ${\mathcal S}(\mathcal H)$ is the space of symmetric operators on $\mathcal H$. In the extensions of DMET introduced in~\cite{negre2025DMET}, the set of coarse descriptors is the larger set 
$$
{\Gamma}_{\rm MS}:=\{ \gamma \in {\mathcal S}(\mathcal H) \; | \; 0 \preceq \gamma \preceq 1, \; \tr(\gamma)=N\}
$$
of all $N$-representable mixed-state 1-RDMs. The condition $ 0 \preceq \gamma \preceq 1$ corresponds to the Pauli principle~\cite{pauli1925zusammenhang}. From a geometrical point of view, ${\Gamma}_{\rm Slater}={\rm Gr}(N,\mathcal H)$, where, for any real vector space~$\mathcal X$ and integer $1 \le n \le {\rm dim}(\mathcal X)-1$,
$$
{\rm Gr}(n,\mathcal X):= \left\{ M \in \mathcal S(\mathcal X) \; | \; M^2=M, \; \tr(M)=n \right\}
$$
is the real Grassmann manifold of the rank-$n$ orthogonal projectors in $\mathcal S(\mathcal X)$, and ${\Gamma}_{\rm MS}$ is the convex hull of ${\Gamma}_{\rm Slater}$. 

\medskip

At each DMET iteration, we have at hand a trial global descriptor $\gamma$. From $\gamma$, we construct for each fragment $x$, a cluster subspace $ \mathcal H_{{\rm clus},x}^\gamma$ satisfying $\mathcal H_{{\rm frag},x} \subset \mathcal H_{{\rm clus},x}^\gamma \subset \mathcal H$ of dimension $(\ell_x+m_x)$ with $1 < m_x \ll L$, which we decompose as the orthogonal sum of the fragment subspace $\mathcal H_{{\rm frag},x}$ and the so-called {\rm bath} subspace $\mathcal H_{{\rm bath},x}^\gamma$:
$$
\mathcal H_{{\rm clus},x}^\gamma = \mathcal H_{{\rm frag},x} \oplus \mathcal H_{{\rm bath},x}^\gamma, \quad \mbox{with} \quad \dim(\mathcal H_{{\rm bath},x}^\gamma)=m_x.
$$
As detailed below, our method for constructing $\mathcal H_{{\rm clus},x}^\gamma$ is to minimize in some sense (vide infra) the entanglement between  $\mathcal H_{{\rm clus},x}^\gamma$ and its orthogonal complement $\mathcal{H}_{{\rm env},x}^\gamma := (\mathcal{H}_{{\rm clust},x}^\gamma)^\perp$.
The many-body Hamiltonian $H_N$ of the whole system is then written in second quantization and projected on the Fock space generated by $\mathcal H_{{\rm clus},x}^\gamma$. This leads to a quantum many-body problem of dimension $2^{\ell_x+m_x} \ll {L \choose N}$, which can be solved either by exact diagonalization if $2^{\ell_x+m_x}$ is not too large, or by advanced approximation methods. The solutions of the $N_{\rm f}$ cluster subproblems are then recombined to form the trial global descriptor for the next iteration, and the process is iterated until convergence. This elementary DMET procedure is commonly combined with relaxation and extrapolation methods to achieve or accelerate convergence.

\medskip

We now focus on the bath-construction subproblem: Given 
\begin{itemize}
    \item[(i)] a fragment subspace $\mathcal H_{\rm f}$ of dimension $\ell$,\vspace{-2mm}
    \item[(ii)] a bath dimension $m$,\vspace{-2mm}
    \item[(iii)] a trial  1-RDM $\gamma$ in ${\Gamma}_{\rm Slater}$ or in ${\Gamma}_{\rm MS}$,
\end{itemize}
construct a bath subspace $\mathcal H_{{\rm bath}}^\gamma \subseteq \mathcal H$ of dimension $m$ that is orthogonal to $\mathcal H_{\rm frag}$. Equivalently, construct a cluster subspace $\mathcal H_{{\rm clus}}^\gamma \subseteq \mathcal H$ of dimension $(\ell+m)$ containing $\mathcal H_{\rm frag}$.

Without loss of generality, we can identify ${\mathcal H}$ with $\R^L$  and assume that $\mathcal{H}_{\rm frag} = \text{span}(e_1, \dots, e_\ell)$, where $(e_1, \dots e_L)$ is the canonical basis of $\R^L$. Then, the orthogonal projectors $\Pi_{\rm frag}$ and $\Pi_{\rm clus}^\gamma$ onto $\mathcal{H}_{\rm frag}$ and $\mathcal H_{{\rm clus}}^\gamma$, respectively, are represented by block matrices of the form
\begin{equation}\label{eq:PfPc}
\Pi_{\rm frag}=\left( \begin{array}{cc} I_\ell & 0  \\ 0 & 0 \end{array} \right), \quad 
\Pi_{\rm clus}^\gamma=\left( \begin{array}{cc} I_{\ell} & 0  \\ 0 & P_{\rm clust}^{\gamma} \end{array} \right), \quad \mbox{with} \quad P_{\rm clust}^{\gamma} \in {\rm Gr}(m,\R^M), \quad M=L-\ell.
\end{equation}
The idea is to choose $\mathcal H_{{\rm clus}}^\gamma$ in such a way that the 1-RDM $\gamma$ is as disentangled as possible with respect to the cluster-environment partition of the system. Note that if $\mathcal H=\mathcal H_{\rm S_1} \oplus \mathcal H_{\rm S_2}$ defines a partition of the system into two subsystems $\rm S_1$ and $\rm S_2$, it is not possible to determine whether a quantum state is disentangled with respect to the $\rm S_1$-$\rm S_2$ partition from its 1-RDM alone. On the other hand, if a quantum state with 1-RDM $\gamma$ is disentangled w.r.t. the $\rm S_1$-$\rm S_2$ partition, then $\Pi_{{\rm S_1}} \gamma \Pi_{{\rm S_2}}=0$, where $\Pi_{\rm S_1}$ and $\Pi_{\rm S_2}$ are the orthogonal projectors onto $\mathcal H_{\rm S_1}$ and $\mathcal H_{\rm S_2}$, respectively. We will therefore use the quantity $\Pi_{{\rm S_1}} \gamma \Pi_{{\rm S_2}}$ as a proxy of entanglement, and define $\mathcal H_{{\rm clus}}^\gamma$ as the range of an orthogonal projector solving
\begin{align} \label{eq:opt_Pi}
\Pi_{\rm clus}^\gamma \in \mathop{\rm argmin}_{
\substack{\Pi \in {\rm Gr}(\ell+m,\R^L) \\ \Pi\,\Pi_{\rm frag}=\Pi_{\rm frag}}
}  \mathcal J^\gamma(\Pi), \quad \mbox{with} \quad \mathcal J^\gamma(\Pi):=\|\Pi\gamma\Pi^\perp\|^2, \quad \Pi^\perp=I_L-\Pi.
\end{align}
Note that the condition $\Pi\,\Pi_{\rm frag}=\Pi_{\rm frag}$ is equivalent to $\Pi$ being of the form 
\begin{equation}\label{eq:Pi_P}
\Pi=\left( \begin{array}{cc} I_{\ell} & 0  \\ 0 & P \end{array} \right) \quad \mbox{with} \quad  P \in {\rm Gr}(m,\R^M).
\end{equation}
Decomposing $\gamma$ as
\begin{align}
\label{rdm1_basis1}
     \gamma = \begin{pmatrix}
        \gamma_{\rm frag } & \gamma_{{\rm frag}, \rm ext}\\
        \gamma_{{\rm ext}, \rm frag} & \gamma_{\rm ext} \end{pmatrix}, \quad \gamma_{\rm frag } \in \R^{\ell \times \ell}_{\rm sym}, \quad   \gamma_{\rm ext} \in \R^{M \times M}_{\rm sym}, \quad \gamma_{{\rm frag}, \rm ext} = \gamma_{{\rm ext}, \rm frag}^T \in \mathbb{R}^{(L-\ell) \times \ell},
\end{align}
an elementary calculation shows that for any $\Pi$ of the form \eqref{eq:Pi_P}, it holds
\begin{align*}
\mathcal J^\gamma(\Pi) &= \left\| \left( \begin{array}{cc} I_{\ell} & 0  \\ 0 & P \end{array} \right) \begin{pmatrix}
        \gamma_{\rm frag } & \gamma_{{\rm frag}, \rm ext}\\
        \gamma_{{\rm ext}, \rm frag} & \gamma_{\rm ext} \end{pmatrix} \left( \begin{array}{cc} 0 & 0  \\ 0 & (I_M-P) \end{array} \right) \right\|^2 \\
        &= \tr(\gamma_{{\rm frag}, \rm ext}(1-P) \gamma_{{\rm ext},{\rm frag}}) + \tr\left( P \gamma_{\rm ext } (1-P) \gamma_{\rm ext } \right) \\
        &= 2 J(P) + \|\gamma_{{\rm ext}, \rm frag}\|^2,
\end{align*}
with
\begin{equation}\label{eq:BCP-AB}
J(P) = \tr(BP)-\frac 12 \tr(APAP), \quad  A:= \gamma_{\rm ext }, \quad \mbox{and} \quad B:=\frac 12 \left(\gamma_{\rm ext }^2- \gamma_{{\rm ext}, \rm frag} \gamma_{{\rm frag}, \rm ext}  \right).
\end{equation}
The matrix $C$ in~\eqref{eq:def_tildeJ} is then given by 
\begin{equation}\label{eq:BCP-C}
C:=B-\frac 12 A^2 = - \frac 12 \gamma_{{\rm ext}, \rm frag} \gamma_{{\rm frag}, \rm ext} .
\end{equation}
Since $\gamma \succeq 0$, we have $A \succeq 0$.
Problem~\eqref{eq:opt_Pi} is therefore equivalent to~\eqref{eq:OptProblem}--\eqref{eq:CostFunctionJ} with the above choices of matrices $A$ and $B$.
Note that we chose the Frobenius norm to measure the magnitude of $\Pi\gamma\Pi^\perp$. Other choices, for instance involving the kinetic energy of the 1-RDM $\gamma$, could also be considered, leading to different Grassmann optimization problems. 

\medskip

In this work, full disentanglement is said to be achieved if
\begin{equation}\label{eq:full_disentanglement}
\mathop{\rm min}_{
\substack{\Pi \in {\rm Gr}(\ell+m,\R^L) \\ \Pi\,\Pi_{\rm frag}=\Pi_{\rm frag}}
}  \mathcal J^\gamma(\Pi) = 0 \quad \mbox{(full disentanglement criteria)}.
\end{equation}
Equivalently, full disentanglement is achieved if there exists a cluster subspace $\mathcal H_{{\rm clus}}^\gamma$ for which $\gamma$ is block-diagonal in the decomposition 
$$
\mathcal H = \mathcal H_{{\rm clus}}^\gamma \oplus {\mathcal H_{{\rm clus}}^\gamma}^\perp.
$$
In other words, full disentanglement can be achieved if we can find an $(\ell+m)$-dimensional $\gamma$-invariant subspace of $\mathcal H$ containing $\mathcal H_{\rm frag}$. 

\medskip

In conventional DMET, $\gamma \in \Gamma_{\mathrm{Slater}}$. In this case, the following result holds.

\begin{proposition}
\label{prop:DMET_proj_sol}
    If $\gamma \in \Gamma_{\mathrm{Slater}}$ and $m\geq\ell$, then full disentanglement can be achieved and constructing an admissible $\mathcal H_{{\rm clus}}^\gamma$ reduces to performing a QR decomposition of the matrix $\gamma_{\rm frag,ext}$.
    \end{proposition}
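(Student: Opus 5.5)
Since $\gamma \in \Gamma_{\mathrm{Slater}}$ is an orthogonal projector, the plan is to exhibit explicitly an $(\ell+m)$-dimensional $\gamma$-invariant subspace of $\mathcal H=\R^L$ containing $\mathcal H_{\rm frag}$. By the reformulation following~\eqref{eq:full_disentanglement}, this is equivalent to full disentanglement. The natural candidate is the standard DMET bath, built from the off-diagonal block $\gamma_{{\rm ext},{\rm frag}} \in \R^{M\times \ell}$.

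Step 1: construct the bath. Take a (thin) QR decomposition $\gamma_{{\rm ext},{\rm frag}} = Q R$ with $Q\in\R^{M\times \ell}$ having orthonormal columns. If $\gamma_{{\rm ext},{\rm frag}}$ is rank deficient, use a rank-revealing or pivoted QR and keep only the $r\le \ell$ columns spanning ${\rm Ran}(\gamma_{{\rm ext},{\rm frag}})$. Let $W_0:={\rm Ran}(\gamma_{{\rm ext},{\rm frag}})\subset \R^M$. Set $\mathcal H_0 := \mathcal H_{\rm frag}\oplus (\{0\}\times W_0)$, so $\dim \mathcal H_0 = \ell + r \le \ell+m$.

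Step 2: show $\gamma$-invariance of $\mathcal H_0$. This is the core point, and it is where $\gamma^2=\gamma$ enters. For $x$ in the fragment, $\gamma (x,0) = (\gamma_{\rm frag}x, \gamma_{{\rm ext},{\rm frag}}x) \in \mathcal H_0$ trivially. For a bath vector $(0,\gamma_{{\rm ext},{\rm frag}}y)$, compute
\[
\gamma(0,\gamma_{{\rm ext},{\rm frag}}y) = \big(\gamma_{{\rm frag},{\rm ext}}\gamma_{{\rm ext},{\rm frag}}y,\; \gamma_{\rm ext}\gamma_{{\rm ext},{\rm frag}}y\big).
\]
Expand the blocks of $\gamma^2=\gamma$. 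The $(\rm ext,frag)$ block gives $\gamma_{{\rm ext},{\rm frag}}\gamma_{\rm frag}+\gamma_{\rm ext}\gamma_{{\rm ext},{\rm frag}}=\gamma_{{\rm ext},{\rm frag}}$. Hence $\gamma_{\rm ext}\gamma_{{\rm ext},{\rm frag}} = \gamma_{{\rm ext},{\rm frag}}(I_\ell-\gamma_{\rm frag})$, whose range lies in $W_0$. So $\mathcal H_0$ is $\gamma$-invariant. Equivalently, in the notation of~\eqref{eq:opt_Pi}, with $P_0$ the projector onto $W_0$ this gives $\mathcal J^\gamma(\Pi)=0$: both terms $\tr(\gamma_{{\rm frag},{\rm ext}}(1-P_0)\gamma_{{\rm ext},{\rm frag}})$ and $\tr(P_0\gamma_{\rm ext}(1-P_0)\gamma_{\rm ext})$ vanish, the latter because $\gamma_{\rm ext}W_0\subset W_0$.

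Step 3: pad to dimension exactly $m$. This is the only mildly delicate point. If $r<m$, I need to add $m-r$ further bath directions while keeping invariance. Since $\gamma$ is symmetric and $\mathcal H_0$ is invariant, $\mathcal H_0^\perp = \{0\}\times (W_0^\perp)$ is also invariant. There, $\gamma$ restricts to the symmetric operator $\gamma_{\rm ext}|_{W_0^\perp}$; the fragment components vanish because $\gamma_{{\rm frag},{\rm ext}}$ kills $W_0^\perp={\rm Ker}(\gamma_{{\rm frag},{\rm ext}})$. This restriction is diagonalizable, so any $m-r$ of its orthonormal eigenvectors can be appended, giving an $(\ell+m)$-dimensional invariant subspace. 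The dimension count $m\le M$ makes this possible.

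The hypothesis $m\ge\ell$ is used only to guarantee $r\le \ell\le m$, so that the full QR range fits in the bath. The "reduces to QR" claim then follows because the bath basis is given by the columns of $Q$, completed by eigenvectors only in the degenerate case.
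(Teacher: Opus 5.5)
Your proof is correct and follows essentially the paper's route: the paper also uses $\mathcal H_{\rm frag}+\gamma\mathcal H_{\rm frag}$ (your $\mathcal H_0$), gets its $\gamma$-invariance from $\gamma^2=\gamma$ via the one-line identity $\gamma(\mathcal H_{\rm frag}+\gamma\mathcal H_{\rm frag})=\gamma\mathcal H_{\rm frag}$ (shorter than your block computation), and builds the bath by Gram--Schmidt/QR. Your Step 3 is more careful than the paper, which claims that \emph{any} $(\ell+m)$-dimensional subspace containing $\mathcal H_{\rm frag}+\gamma\mathcal H_{\rm frag}$ is admissible; this fails in general (e.g.\ $\gamma=e_3e_3^T$ on $\R^3$ with $\ell=m=1$, where ${\rm span}(e_1,e_2+e_3)$ is not $\gamma$-invariant), so padding with eigenvectors of $\gamma$ on the orthogonal complement, as you do, is the correct completion.
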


\begin{proof} Since $\gamma^2=\gamma$, $\mathcal H_{\rm frag} + \gamma \mathcal H_{\rm frag}$ is a $\gamma$-invariant subspace of $\mathcal H$ containing $\mathcal H_{\rm frag}$ of dimension lower or equal to $2\ell$. An orthogonal basis of $\mathcal H_{\rm frag} + \gamma \mathcal H_{\rm frag}$ can be obtained by enriching the basis $(e_1,\cdots,e_\ell)$ by Gramm--Schmidt orthonormalization of $(\gamma e_1,\cdots,\gamma e_\ell)$, which amounts to performing a QR decomposition of $\gamma_{\rm frag,ext}$. Any $(\ell+m)$-dimensional subspaces of $\mathcal H$ containing  $\mathcal H_{\rm frag} + \gamma \mathcal H_{\rm frag}$ is an admissible $\mathcal H_{{\rm clus}}^\gamma$.
\end{proof}

Note that $\mathcal H_{{\rm clus}}^\gamma$ is unique if $m=\ell$ and ${\rm dim}(\mathcal H_{\rm frag} + \gamma \mathcal H_{\rm frag})=2\ell$, a non-degeneracy condition invoked in~\cite{CFKLL2025}, and usually satisfied in practice in conventional DMET. 

\medskip

In the proposed extensions of DMET, we only assume $\gamma \in \Gamma_{\mathrm{MS}}$. The following result provides a necessary and sufficient condition for full disentanglement to be achievable.

\begin{proposition}
    Any $\gamma \in \Gamma_{\rm MS}$ can be uniquely decomposed as
    \begin{equation}\label{eq:decompD}
    \begin{split}
         & \gamma =  \sum_{i=1}^s n_i P_i, \quad P_i \in {\rm Gr}(m_i,\R^L), \quad m_i \ge 1, \quad P_iP_j=\delta_{ij}P_i, \quad 1 \le i \le s \le L,\\ 
         &\quad 0 <  n_1 < n_2 < \cdots < n_s \le 1,
          \quad  \sum_{i=1}^s n_im_i=N.     
    \end{split}
    \end{equation}
    The linear space
        $$
    \mathcal X:=\mathcal{H}_{\rm frag} + \left( P_1 \mathcal{H}_{\rm frag} \oplus \cdots \oplus P_s \mathcal{H}_{\rm frag} \right) 
    $$
    is the smallest $\gamma$-invariant subspace of $\R^L$ containing $\mathcal{H}_{\rm frag}$.
    Full disentanglement can therefore be achieved if and only if $m \ge {\rm dim}(\mathcal X)-\ell$.
\end{proposition}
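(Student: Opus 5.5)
The decomposition \eqref{eq:decompD} is the spectral decomposition of $\gamma$ restricted to its nonzero eigenvalues. Since $\gamma$ is symmetric with $0 \preceq \gamma \preceq 1$, I would write $\gamma=\sum_i n_i P_i$ over its distinct \emph{positive} eigenvalues $n_1<\dots<n_s\le 1$. Here $P_i$ is the spectral projector of $n_i$ and $m_i={\rm rank}(P_i)\ge 1$. Orthogonality $P_iP_j=\delta_{ij}P_i$ is standard, $s\le L$ is clear, and $\sum_i n_i m_i=\tr(\gamma)=N$. Uniqueness follows from uniqueness of the spectral decomposition: the $n_i$ must be exactly the distinct nonzero eigenvalues, since $\gamma P_j = n_j P_j$. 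The $P_i$ are then the corresponding spectral projectors, because the $P_i$ are mutually orthogonal.

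For the second claim, let $P_0:=I-\sum_{i\ge1}P_i$ be the projector onto ${\rm Ker}(\gamma)$, so that $\gamma=\sum_{i=0}^s n_iP_i$ with $n_0=0$. I would show two inclusions.

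\textbf{$\mathcal X$ is $\gamma$-invariant and contains $\mathcal H_{\rm frag}$.} The containment is immediate. Each $P_i\mathcal H_{\rm frag}$ with $i \ge 1$ is $\gamma$-invariant, since $\gamma P_i v=n_iP_iv$. For $v\in\mathcal H_{\rm frag}$ we have $\gamma v=\sum_{i\ge1}n_iP_iv\in \bigoplus_{i\ge 1} P_i\mathcal H_{\rm frag}$. Hence $\gamma\mathcal X\subset\mathcal X$. The sum $P_1 \mathcal H_{\rm frag} \oplus \cdots \oplus P_s \mathcal H_{\rm frag}$ is direct because the ranges of the $P_i$ are mutually orthogonal.

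\textbf{Minimality.} Let $\mathcal Y$ be any $\gamma$-invariant subspace containing $\mathcal H_{\rm frag}$. Then $\mathcal Y$ is invariant under every polynomial in $\gamma$. Each spectral projector $P_i$ ($i\ge1$) is such a polynomial, namely the Lagrange interpolation polynomial equal to $1$ at $n_i$ and $0$ at the other eigenvalues, including $0$. Therefore $P_i\mathcal H_{\rm frag}\subset\mathcal Y$ for every $i$, and so $\mathcal X\subset\mathcal Y$. I expect this step to be the main point to get right: one must exclude $P_0$ from the list of interpolated projectors while still knowing $\mathcal H_{\rm frag}\subset\mathcal Y$ directly. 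This is precisely why $\mathcal H_{\rm frag}$ appears separately in the definition of $\mathcal X$.

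\textbf{Full disentanglement.} By the discussion preceding Proposition~\ref{prop:DMET_proj_sol}, full disentanglement holds iff there exists an $(\ell+m)$-dimensional $\gamma$-invariant subspace $\mathcal Y\supset\mathcal H_{\rm frag}$. Indeed, $\|\Pi\gamma\Pi^\perp\|=0$ iff $[\Pi,\gamma]=0$ iff ${\rm Ran}\,\Pi$ is $\gamma$-invariant.

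If such a $\mathcal Y$ exists, minimality gives $\mathcal X\subset\mathcal Y$, so $\dim\mathcal X\le \ell+m$.

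Conversely, suppose $m\ge\dim\mathcal X-\ell$. Since $\gamma$ is symmetric and $\mathcal X$ is invariant, $\mathcal X^\perp$ is also $\gamma$-invariant, so $\gamma|_{\mathcal X^\perp}$ is diagonalizable by an orthonormal eigenbasis. Adjoining $\ell+m-\dim\mathcal X$ of these eigenvectors to $\mathcal X$ yields a $\gamma$-invariant subspace of dimension exactly $\ell+m$ containing $\mathcal H_{\rm frag}$. This is possible as long as $\ell+m\le L$, which holds implicitly since $m \le M = L-\ell$.
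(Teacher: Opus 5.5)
Your proof is correct and follows the paper's argument. You use the spectral decomposition over the positive eigenvalues, get invariance of $\mathcal X$ from $\gamma P_i = n_i P_i$, and get minimality because each $P_i$ is a function of $\gamma$; your Lagrange interpolation polynomials are just the finite-dimensional form of the paper's Borel functional calculus $f(\gamma)$. Where you go beyond the paper is the final ``if and only if'', which the paper leaves as an immediate consequence: you prove it, and you correctly enlarge $\mathcal X$ by eigenvectors of $\gamma|_{\mathcal X^\perp}$, which is needed because an arbitrary $(\ell+m)$-dimensional superspace of $\mathcal X$ need not be $\gamma$-invariant.
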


\begin{proof} The decomposition \eqref{eq:decompD} is the usual spectral decomposition of $\gamma$. Indeed, since $\gamma  \in \Gamma_{\rm MS}$, its spectrum $\sigma(\gamma)$ lies in the range $[0,1]$. Defining $s:=| \sigma(\gamma) \cap (0,1] |$, we have 
$$
\sigma(\gamma) \cap (0,1]=\{n_1, \cdots, n_s\} \quad \mbox{with} \quad 0 <  n_1 < n_2 < \cdots < n_s \le 1,
$$
and 
$$
\gamma =  \sum_{i=1}^s n_i P_i \quad \mbox{with} \quad  P_i:=\1_{\{n_i\}}(\gamma).
$$
Denoting by $m_i$ the rank of $P_i$ (i.e., the multiplicity of the eigenvalue $n_i$), we have 
$$
P_iP_j=\delta_{ij}P_i, \quad 0 \le i \le s, \quad \mbox{and} \quad \sum_{i=1}^s n_im_i= \sum_{i=1}^s n_i \tr(P_i)=\tr(\gamma)=N.
$$
Since $\gamma$ is real symmetric, the above decomposition is unique.

\medskip

Since $\gamma  P_i=n_i P_i$, we have that
\begin{align*}
\gamma \mathcal X & = \gamma \mathcal{H}_{\rm frag} + \gamma  P_1 \mathcal{H}_{\rm frag} + \cdots + \gamma  P_s \mathcal{H}_{\rm frag} \\
& = \left( \sum_{i=1}^s n_iP_i \right) \mathcal{H}_{\rm frag}  + P_1 \mathcal{H}_{\rm frag} + \cdots +   P_s \mathcal{H}_{\rm frag}  \subset P_1 \mathcal{H}_{\rm frag} \oplus \cdots \oplus P_s \mathcal{H}_{\rm frag} \subset \mathcal X,
\end{align*}
which proves that $\mathcal X$ is a $\gamma$-invariant subspace of $\R^L$ containing $\mathcal H_{\rm frag}$.

\medskip

Finally, let $\mathcal Y$ be a $\gamma$-invariant subspace of $\R^L$ containing $\mathcal H_{\rm frag}$. As $\gamma \mathcal Y \subset \mathcal Y$ and $\mathcal H_{\rm frag} \subset \mathcal Y$, we have that $f(\gamma) \mathcal H_{\rm frag} \subset \mathcal Y$ for any Borel function $f: \R \to \R$. In particular, we have for all $1 \le i \le s$, $P_i  \mathcal H_{\rm frag} = \1_{\{n_i\}}(\gamma) \mathcal H_{\rm frag}  \subset \mathcal Y$. As we also have $\mathcal H_{\rm frag} \subset \mathcal Y$ by hypothesis, we obtain that $\mathcal X\subset \mathcal Y$, which concludes the proof.
\end{proof}

\medskip

Note that if $\gamma \in \Gamma_{\rm Slater}$, then
$s = 1$, $n_1 = 1$, and $P_1 = \gamma$.
In this case, 
$
\mathcal X = \mathcal H_{\rm frag}+\gamma \mathcal H_{\rm frag},
$
and $\dim(\mathcal X) \le 2\ell$. In the general case $\gamma \in \Gamma_{\rm MS}$, we have the bound
$$
\dim(\mathcal X)
\le
\ell + \sum_{i=1}^s \min(m_i,\ell).
$$
Consequently, full disentanglement can be achieved whenever
$$
m \ge \sum_{i=1}^s \min(m_i,\ell).
$$

This bound improves upon the previously reported condition
$m \ge s\ell$ in~\cite{Marecat2023,Cernatic2024},
which was derived using arguments based on minimal polynomials and
Householder rotations.
An optimal bound can be obtained by computing
$\dim(\mathcal X)$ via a singular value decomposition.

\section{Numerical methods}
\label{sec:numerical_methods}

We consider three classes of numerical methods for solving~\eqref{eq:OptProblem}--\eqref{eq:CostFunctionJ}:
\begin{enumerate}
\item Riemannian optimization methods (Section~\ref{sec:RiemanniansOptimization});
\item self-consistent Field (SCF) algorithms exploiting the Aufbau principle proved in Theorem~\ref{thm:Aufbau} (Section~\ref{sec:SCF});
\item methods consisting in solving the convexified problem~\eqref{eq:convex_opt}, which, according to the results established in Theorem~\ref{thm:convex_pbm}, provides the global minimum of~\eqref{eq:OptProblem}--\eqref{eq:CostFunctionJ} if the matrix $H_\star$ is gapped, and an hopefully good initial guess if $H_\star$ is not gapped (Section~\ref{sec:convex_num}).
\end{enumerate}

\subsection{Riemannian optimization methods}
\label{sec:RiemanniansOptimization}

Although it is possible to use Riemannian optimization methods directly on Grassmann manifolds, it is sometimes more efficient from a numerical viewpoint to reformulate an optimization problem on a Grassmann manifold $\mathcal M:={\rm Gr}(m,\R^M)$ into an optimization problem on the Stiefel manifold 
$$
\mathcal N:={\rm St}(m,\R^M) := \{V \in \mathbb{R}^{M \times m} \; : \;  V^T V = I_{m}\}.
$$ 
Denoting by $J_{\rm St}: \mathbb{R}^{M \times m} \to \R$ the function defined by
$$
\forall V \in \R^{M \times m}, \quad J_{\rm St}(V) := J(VV^T),
$$
we have indeed
$$
\min_{P \in \mathcal M} J(P) = \min_{V \in \mathcal N} J_{\rm St}(V),
$$
and $P_\star$ is a minimizer of the LHS if and only if $P_\star=V_\star V_\star^T$, with $V_\star$ a minimizer of the RHS. Note however that if $V_\star$ is a minimizer of $J_{\rm St}$ on $\mathcal N$, then so is $V_\star U$ for all $U \in O(m)$. To perform the simulations presented in Section~\ref{sec:numerics}, we used the Riemannian trust-region algorithm with default parameters and convergence criterion $\|\nabla_{\mathcal M}J(P_k)\| \le 10^{-12}$  or $\|\nabla_{\mathcal N}J_{\rm St}(V_k)\| \le 10^{-12}$ implemented in manopt.jl, a Riemannian optimization library in Julia~\cite{Bergmann2022}, to seek minimizers either of $J$ on $\mathcal M$, or of $J_{\rm st}$ on $\mathcal N$.

\subsection{SCF algorithms}
\label{sec:SCF}

In view of the Aufbau principle established in Theorem~\ref{thm:Aufbau}, the Roothaan algorithm~\cite{Roothaan1951}, originally introduced to solve the Hartree--Fock equations, provides a natural SCF algorithm to solve~\eqref{eq:OptProblem}--\eqref{eq:CostFunctionJ}. It reads
\begin{equation}
P_{k+1}^{\rm Rth} \in \mathop{\rm argmin}_{P \in \mathcal M} \tr(G(P_k^{\rm Rth})P) \quad (\mbox{recall that } G(P) = B - A P A). \label{eq:Roothaan}
\end{equation}
Each iteration is easy to perform if $L$ is not too large: we just have to compute $G(P_k^{\rm Rth})$, diagonalize this matrix in an orthonormal basis, and construct the orthogonal projector $P_{k+1}^{\rm Rth}$ on the vector space spanned by the lowest $m$ eigenvalues. The output of iteration $k$ is unique if and only if there is a gap between the $m^{\rm th}$ and $(m+1)^{\rm st}$ eigenvalue of $G(P_k^{\rm Rth})$. The Roothaan algorithm was investigated in detail for the (unrestricted) Hartree--Fock setting, which is also a quadratic Grassmann optimization problem for which the Aufbau principle is valid. It was shown~\cite{cances2000c,levitt2012b} that either the iterates converge, or they asymptotically oscillate between two states, none of them being a critical point of the Hartree--Fock problem. It was also shown numerically in~\cite{Cances_2021} that for non-quadratic cost functions, the Roothaan algorithm may either converge, or oscillate between two or more states, or have a chaotic behavior, depending on the cost function and the initial guess. In Hartree--Fock or Kohn--Sham Density Functional Theory (DFT) computations, the Roothaan algorithm is stabilized and accelerated using extrapolation (e.g. DIIS~\cite{Pulay1980,Pulay1982}), relaxation (e.g. ODA~\cite{cances2000b}), or a combination of both (e.g. EDIIS/DIIS scheme~\cite{Kudin2002}). Recall that the ODA is initialized by choosing $D_0 \in \mathcal K$ and reads
\begin{equation}
P_{k+1}^{\rm ODA} \in \mathop{\rm argmin}_{P \in \mathcal M} \tr(G(D_k)P), \qquad D_{k+1}=\mathop{\rm argmin}_{D \in [D_k,P_{k+1}^{\rm ODA}]} J(D), \label{eq:ODA}
\end{equation}
where $[D_k,P_{k+1}^{\rm ODA}]$ is the segment line in $\R^{M \times M}_{\rm sym}$ joining $D_k$ and $P_{k+1}^{\rm ODA}$. Parameterizing this set as
$$
[D_k,P_{k+1}^{\rm ODA}]= \{  (1-\alpha) D_{k}+\alpha P_{k+1}^{\rm ODA}, \; \alpha \in [0,1] \},
$$
and introducing the function $f_k:\R \to \R$ defined by 
$$
f_{k}(\alpha) = J((1-\alpha) D_{k}+\alpha P_{k+1}^{\rm ODA}),
$$
we see that, since $J$ is quadratic, $f_k$ is a quadratic polynomial. Its minimizer $\alpha_k$ on $[0,1]$ can therefore be computed analytically. Unless ODA has converged, $f'_k(0) \neq 0$, $\alpha_k$ is unique, and we have
$$
D_{k+1}^{\rm ODA}=(1-\alpha_k) D_{k}+\alpha_k P_{k+1}^{\rm ODA}.
$$
Remarkably, for the special case of the quadratic function $J$ defined in~\eqref{eq:CostFunctionJ}, we have the following result.

\begin{proposition} \label{prop:SCF} Consider a quadratic function $J$ of the form \eqref{eq:CostFunctionJ}. Then, 
\begin{enumerate}
\item the sequence $(P_k^{\rm ODA})_{k \in \N}$ generated of the ODA \eqref{eq:ODA} initialized with $D_0=P_0 \in \mathcal M$ coincides with the sequence $(P_k^{\rm Rth})_{k \in \N}$ generated by the Roothaan algorithm \eqref{eq:Roothaan} initialized with the same $P_0$;
\item denoting by $P_k:=P_k^{\rm Rth}=P_k^{\rm ODA}$, the sequence $(J(P_k))_{k \in \N}$ is non-increasing and converges to a critical value of $J$ on $\mathcal M$, and any accumulation point $P_\star$ of the sequence $(P_k)_{k \in \N}$ is a critical point of $J$ on $\mathcal M$ satisfying the Aufbau principle~\eqref{eq:Aufbau};
\item under the additional assumption that the matrix $A$ is invertible, the whole sequence $(P_k)_{k \in \N}$ converges to a a critical point of $J$ on $\mathcal M$ satisfying the Aufbau principle~\eqref{eq:Aufbau}.
\end{enumerate}
\end{proposition}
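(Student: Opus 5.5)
The key observation is that $J$ is \emph{concave} on $\R^{M\times M}_{\rm sym}$: its quadratic part $-\frac12\|A^{1/2}DA^{1/2}\|^2$ is concave. For every $D,P$ we therefore have the exact identity
\begin{equation*}
J(P)=J(D)+\tr\big(G(D)(P-D)\big)-\tfrac12\big\|A^{1/2}(P-D)A^{1/2}\big\|^2 .
\end{equation*}
I would use this identity throughout.

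\textbf{Item 1.} I argue by induction, assuming $D_k=P_k\in\mathcal M$, which holds for $k=0$. The quadratic polynomial $f_k$ is then concave on $[0,1]$, so its minimum on $[0,1]$ is attained at an endpoint. By construction $P_{k+1}$ minimizes $\tr(G(D_k)\,\cdot)$ over $\mathcal M$, and hence over $\mathcal K={\rm CH}(\mathcal M)$. This gives $f_k'(0)=\tr(G(D_k)(P_{k+1}-D_k))\le 0$. By concavity, $f_k(1)\le f_k(0)+f_k'(0)\le f_k(0)$, so $\alpha_k=1$. In the degenerate case $f_k(1)=f_k(0)$ we adopt the convention $\alpha_k=1$. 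Hence $D_{k+1}=P_{k+1}$, and the ODA iterates coincide with the Roothaan iterates.

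\textbf{Item 2.} I apply the identity with $D=P_k$ and $P=P_{k+1}$. Setting $\delta_k:=\tr(G(P_k)P_k)-\min_{P\in\mathcal M}\tr(G(P_k)P)\ge 0$, it gives
\begin{equation*}
J(P_{k+1})=J(P_k)-\delta_k-\tfrac12\|A^{1/2}(P_{k+1}-P_k)A^{1/2}\|^2 .
\end{equation*}
So $(J(P_k))$ is non-increasing. It is bounded below because $\mathcal M$ is compact, hence it converges. Moreover $\sum_k\delta_k<\infty$ and $\sum_k\|A^{1/2}(P_{k+1}-P_k)A^{1/2}\|^2<\infty$.

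Let $P_{k_j}\to P_\star$. The map $P\mapsto \tr(G(P)P)-\min_{Q\in\mathcal M}\tr(G(P)Q)$ is continuous, since the sum of the $m$ lowest eigenvalues of $G(P)$ depends continuously on $P$. Passing to the limit in $\delta_{k_j}\to0$ therefore shows that $P_\star$ minimizes $Q\mapsto\tr(G(P_\star)Q)$ on $\mathcal M$. Consequently $P_\star$ projects onto a span of eigenvectors of $G(P_\star)$ for its $m$ lowest eigenvalues. This gives $[G(P_\star),P_\star]=0$, i.e. criticality by Proposition~\ref{Prop:commutation1storder}, together with the Aufbau property~\eqref{eq:Aufbau}. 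Finally, $\lim_k J(P_k)=J(P_\star)$ is a critical value.

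\textbf{Item 3.} This is the main obstacle. If $A$ is invertible, then $\|A^{1/2}XA^{1/2}\|\ge \lambda_{\min}(A)\|X\|$, so the decrease estimate becomes
\begin{equation*}
J(P_k)-J(P_{k+1})\ge c\|P_{k+1}-P_k\|^2 .
\end{equation*}
This alone only shows that the limit set is connected, and it could a priori be a continuum of critical points. To conclude, I would use the Łojasiewicz inequality for the real-analytic function $J$ on the compact analytic manifold $\mathcal M$, following the strategy used for Hartree--Fock SCF by Levitt.

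The needed relative-error bound comes from the fact that $P_{k+1}$ commutes with $G(P_k)$:
\begin{equation*}
[G(P_k),P_k]=[G(P_k),P_k-P_{k+1}].
\end{equation*}
It follows that
\begin{equation*}
\|{\rm grad}_{\mathcal M}J(P_k)\|=\|[[G(P_k),P_k],P_k]\|\le 4\max_{\mathcal M}\|G\|\,\|P_{k+1}-P_k\|.
\end{equation*}
Combining sufficient decrease, this gradient bound and the Łojasiewicz inequality near an accumulation point $P_\star$, the standard argument yields $\sum_k\|P_{k+1}-P_k\|<\infty$. The whole sequence then converges, and its limit is $P_\star$, which satisfies~\eqref{eq:Aufbau} by item 2.
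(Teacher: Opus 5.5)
Your proposal is correct and follows the paper's proof essentially step by step. Concavity of $f_k$ together with $f_k'(0)\le 0$ forces $\alpha_k=1$; the exact quadratic expansion gives monotonicity and the summable series that identify accumulation points as Aufbau critical points; and for invertible $A$, sufficient decrease, the commutation $[G(P_k),P_{k+1}]=0$ and the \L{}ojasiewicz inequality yield a finite-length trajectory. The only cosmetic difference is that you apply \L{}ojasiewicz at a single accumulation point with the usual capture argument, whereas the paper makes the constants uniform on a neighborhood of the level set $\Gamma=\{P\in\mathcal M : J(P)=J_\star\}$.
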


\begin{proof} We adapt arguments from~\cite{cances2000b,cances2000c,levitt2012b,levitt2012b}, where the Roothaan algorithm and the ODA were studied in the Hartree--Fock setting. First, we have
$$
f_{k}(\alpha) = a_{k} \alpha^2 + b_{k} \alpha + c_{k},
$$
with 
\begin{align*}
& c_k= f_k(0) =J(D_{k}), \\
& b_k= f_{k}'(0) = \langle \nabla J(D_k), P_{k+1}^{\rm ODA}-D_k \rangle =  \tr(G(D_k) (P_{k+1}^{\rm ODA}-D_k )) \leq 0, \\
& a_{k} = - \frac{1}{2} \tr( A (P_{k+1}^{\rm ODA}-D_k ) A(P_{k+1}^{\rm ODA}-D_k))  = - \frac{1}{2} \left\| A^{1/2} (P_{k+1}^{\rm ODA}-D_k ) A^{1/2} \right\|^2 \leq 0.
\end{align*}
which leads to the fact that the minimum of $f_k$ on the interval $[0,1]$ is reached at $\alpha_k = 1$. Hence, in this special case, ODA boils down to the Roothaan algorithm, and we thus have $D_k=P_k^{\rm ODA}=P_k^{\rm Rth}=:P_k$ for all $k \ge 1$. This proves the first assertion. 

\medskip

To prove the second one, we use the fact that as $\alpha_k=1$ for all $k$, we have that
\begin{align*}
J(P_{k+1}) &=   a_{k}  + b_{k} + c_{k} = J(P_{k}) + \tr(G(P_k) (P_{k+1}-P_k )) - \frac{1}{2} \left\| A^{1/2} (P_{k+1}-P_k ) A^{1/2} \right\|^2 \\
\end{align*}
so that 
\begin{align}
  0 \le \underbrace{- \tr(G(P_k) (P_{k+1}-P_k ))}_{\ge 0} +  \frac{1}{2} \left\| A^{1/2} (P_{k+1}-P_k ) A^{1/2} \right\|^2 \le J(P_{k})-J(P_{k+1}).
  \label{eq:ineq_preL}
\end{align}
This shows that the sequence $(J(P_k))_{k \in \N}$ is non-increasing, thus converges to some $J_\star \in \R$, and that
\begin{equation} \label{eq:bound_series_ODA}
\sum_{k=0}^{+\infty} \underbrace{(-\tr(G(P_k) (P_{k+1}-P_k )))}_{\ge  0}  < +\infty \quad \mbox{and} \quad \sum_{k=0}^{+\infty}  \left\| A^{1/2} (P_{k+1}-P_k ) A^{1/2} \right\|^2< \infty.
\end{equation}
Let $P_\star \in \mathcal M$ be an accumulation point of the sequence $(P_k)_{k \in \N}$. Since 
$$
P_{k+1} \in \mathop{\rm argmin}_{P \in \mathcal M} \tr(G(P_k)(P-P_k)),
$$
we must have
$$
\min_{P \in \mathcal M} \tr(G(P_\star) (P-P_\star )) =0;
$$
otherwise the sequence non-negative numbers $(-\tr(G(P_k) (P_{k+1}-P_k )))_{k \in \N}$ would not go to zero. This implies that $P_\star$ is a critical point of $J$ on $\mathcal M$ satisfying the Aufbau principle~\eqref{eq:Aufbau} and such that $J(P_\star)=J_\star$. The second assertion is proved.

Finally, if the matrix $A$ is invertible, we deduce from the second bound in~\eqref{eq:bound_series_ODA} and the equivalence of the norms in finite dimension that
$$
\sum_{k=0}^{+\infty}  \left\| P_{k+1}-P_k  \right\|^2< \infty.
$$
In particular, $\|P_{k+1}-P_k\| \to 0$. To prove that $(P_k)_{k \in \N}$ converges, we use the same arguments as in~\cite{levitt2012b}. Let $\Gamma:=\{P \in \mathcal M \; : \; J(P)=J_\star\}$ and $P \in \Gamma$. As $J$ is real-analytic, there exist,  for each point $P \in \Gamma$, $\delta_P > 0$, $\kappa_P > 0$ and $\theta_P \in (0,1/2]$ such that the \L ojasiewicz's inequality~\cite{Lojasiewicz} holds:
\begin{equation}\label{eq:Lojasiewicz}
\forall P' \in \mathcal M \mbox{ s.t. } \|P'-P\|\le \delta_P, \quad  (J(P')-J_\star)^{1-\theta_P} \le \kappa_P \| \nabla_\mathcal MJ(P')\| = \kappa_P \|[G(P'),P']\|.
\end{equation}
Since $\Gamma$ is non-empty (it contains $P_\star$) and compact, we obtain that there exist $\delta > 0$, $\kappa > 0$ and $\theta \in (0,1/2]$ such that 
$$ 
\forall P \in \mathcal M \mbox{ s.t. } {\rm d}(P,\Gamma) \le \delta, \quad  (J(P)-J_\star)^{1-\theta} \le  \kappa \|[G(P),P]\|.
$$
By compactness $J(P_k)\to J_\star$ implies $d(P_k,\Gamma) \to 0$ and we thus have, using the fact that $[G(P_k),P_{k+1}]=0$, that there exists $k_0 \in \N$ such that
$$
\forall k \ge k_0, \quad (J(P_k)-J_\star)^{1-\theta} \le \kappa \|[G(P_k),P_k]\| = \kappa \|[G(P_k),P_k-P_{k+1}]\| \le M \|P_k-P_{k+1}\|,
$$
with $M=2\kappa \max_{P \in \mathcal M} \|G(P)\|_{\rm op}$.
We infer from the above inequality and the concavity of the function $\R_+ \ni x \mapsto x^\theta \in \R$ that for all $k \ge k_0$,
\begin{align*}
    (J(P_k)-J_\star)^\theta -  (J(P_{k+1})-J_\star)^\theta & \ge \theta (J(P_k)-J_\star)^{\theta-1} \left( J(P_k)-J(P_{k+1}) \right) \\
    & \ge \frac{\theta}{M} \frac{\left( J(P_k)-J(P_{k+1}) \right)}{\|P_k-P_{k+1}\|}.
\end{align*}
Using \eqref{eq:ineq_preL} and the invertibility of $A$, we obtain that there exists $\alpha > 0$ such that
$$
J(P_k)-J(P_{k+1}) \ge \alpha \|P_k-P_{k+1}\|^2.
$$
We thus get 
$$
\forall k \ge k_0, \quad (J(P_k)-J_\star)^\theta -  (J(P_{k+1})-J_\star)^\theta \ge \frac{\alpha\theta}{M} 
\|P_k-P_{k+1}\|.
$$
As the left-hand side is a summable series, so is the right-hand side. This implies that the sequence $(P_k)_{k \in \N}$ converges.
\end{proof}

\subsection{Optimal damping algorithm for the convexified problem}
\label{sec:convex_num}

To solve the convexified problem~\eqref{eq:convex_opt}, we use the ODA, which reads in this specific setting
\begin{equation}
P_{k+1}^{\rm conv} \in \mathop{\rm argmin}_{P \in \mathcal M} \tr(\widetilde G(D_k^{\rm conv})P), \qquad D_{k+1}^{\rm conv}=\mathop{\rm argmin}_{D \in [D_k^{\rm conv},P_{k+1}^{\rm conv}]} \widetilde J(D), \label{eq:ODA_conv}
\end{equation}
with 
$$
\widetilde G(D) = \nabla \widetilde J(D) = C- \frac 12[[A,D],A].
$$
We have that
$$
D_{k+1}^{\rm conv}=(1-\beta_k) D_{k}^{\rm conv}+\beta_k P_{k+1}^{\rm conv},
$$
where the optimal damping parameter $\beta_k$ is obtained by solving
$$
\beta_k \in \mathop{\rm argmin}_{\alpha \in [0,1]} \widetilde J((1-\beta) D_{k}^{\rm conv}+\beta P_{k+1}^{\rm conv})= \mathop{\rm argmin}_{\alpha \in [0,1]} \widetilde f_k(\beta),
$$
with 
$$
\widetilde f_k(\beta) = \widetilde a_k \beta^2 + \widetilde b_k \beta + \widetilde c_k,
$$
and
$$
\widetilde a_k= \frac 14 \| [A,P_{k+1}^{\rm conv}-D_{k}^{\rm conv}] \|^2 , \quad \widetilde b_k=\tr(\widetilde G(D_k^{\rm conv})(P_{k+1}^{\rm conv}-D_{k}^{\rm conv})), \quad \widetilde c_k= J(D_{k}^{\rm conv}).
$$

\section{Some applicative examples}
\label{sec:numerics}

We first analyze low-dimensional illustrative examples ($M=2$ or $3$, $m=1$) allowing us to show the possible behaviors of the various numerical methods introduced in the previous sections. We then present an example originating from DMET bath construction problems, in which the matrices $A$ and $B$ are obtained using formulae \eqref{rdm1_basis1}--\eqref{eq:BCP-C}, where $\gamma$ is the exact ground-state 1-RDM of the system, or rather a good approximation of it. The motivation for this choice of $\gamma$ is that in the final steps of the DMET iteration loop, the trial 1-RDM is expected to be a good approximation of the ground-state 1-RDM.

\subsection{Analysis of low-dimensional illustrative examples}
\label{sec:random}

The purpose of this section is to show that 
\begin{itemize}
\item the functional $J$ may have local, non-global minima on $\mathcal M$;
\item these local minima have attraction basins with positive measure not only for Riemannian optimization methods, but also for the SCF algorithm~\eqref{eq:Roothaan};
\item while the Roothaan algorithm applied to the minimization of $J$ on $\mathcal M$ always converges (in a sense specified in Proposition~\eqref{prop:SCF}), this is not the case when this algorithm is applied to the minimization of $\widetilde J$ on $\mathcal M$, although the functions $J$ and $\widetilde J$ coincide on $\mathcal M$;
\item the set of minimizers of $\widetilde J$ on $\mathcal K$ may contain no point belonging to $\mathcal M$, in which case minimizing $\widetilde J$ on $\mathcal M$ using e.g. the ODA~\eqref{eq:ODA_conv}, does not provide the global minimizer on $J$ on~$\mathcal M$.
\end{itemize}

\medskip

\noindent
Consider first the case when $M=2$ and $m=1$. Without loss of generality, we can assume that $A$ is diagonal and $C$ traceless and consider the matrices 
\begin{align*}
    A = \begin{pmatrix}
        a & 0 \\ 0 & a+\sqrt\alpha
    \end{pmatrix}, \quad B = \begin{pmatrix}
        c+\frac{a^2}2 & \beta \\ \beta & c+\frac{(a+\sqrt \alpha)^2}2 
    \end{pmatrix}, \quad C = \begin{pmatrix}
        c & \beta \\ \beta & -c
    \end{pmatrix}, \quad a, \alpha \ge 0, \quad \beta,c \in \R.
\end{align*}
The set $\mathcal M$ is diffeomorphic to the unit circle (the set of rank-$1$ orthogonal projectors in $\R^2$ can be represented as a great circle of the Bloch sphere): any $P \in \mathcal M$ can be written in a unique way as 
$$
P_{x,z} = \frac 12 \left( I_2 + x \sigma_x + z \sigma_z \right) \quad \mbox{with} \qquad x,z \in \R, \; x^2+z^2=1, \quad \sigma_x=\begin{pmatrix} 0 & 1 \\ 1 & 0 \end{pmatrix}, \quad 
\sigma_z=\begin{pmatrix} 1 & 0 \\ 0 & -1 \end{pmatrix}.
$$
When the variable $(x,z)$ span $\R^2$, the matrix $P$ defined by the above formula spans the affine space of trace-$1$ $2 \times 2$ real-symmetric matrices. The restriction of $J$ and $\widetilde J$ to this affine space can be written in terms of the $(x,z)$ variable. We have for all $(x,z) \in \R^2$, $J(P_{x,z})=j(x,z)$, $\widetilde J(P_{x,z})=\widetilde j(x,z)$ with $j,\widetilde j: \R^2 \to \R$ given by 
\begin{align*}
& j(x,z) = \beta x + c z - \frac 18 (2 a^2+2a\sqrt \alpha) x^2  - \frac 18 (2 a^2+2a\sqrt \alpha + \alpha) z^2 + \frac 18 (2 a^2+2a\sqrt \alpha) , \\
    &\widetilde j(x,z) = \beta x + c z + \frac 18 \alpha x^2.
\end{align*}
It is clear that on $\mathbb S^1:=\{(x,z) \in \R^2 \, : \, x^2+z^2=1\}$, we have $\widetilde j(x,z)=j(x,z)$ as expected.

\medskip

\noindent
{\bf Existence of local, non-global minima of $J$ on $\mathcal M$}

\medskip

\noindent
For $\beta=c=0$, $j$ has two non-degenerate global minimizers $X_{0,1}$ and $X_{0,-1}$  on the circle $\mathbb S^1$ and two non-degenerate global maximizers $X_{1,0}$ and $X_{-1,0}$ given by 
$$
X_{0,1}:=\left( \begin{array}{c} 0 \\ 1 \end{array} \right), \quad X_{0,-1}:=\left( \begin{array}{c} 0 \\ -1 \end{array} \right), \quad X_{1,0}:=\left( \begin{array}{c} 1 \\ 0 \end{array} \right), \quad X_{-1,0}:=\left( \begin{array}{c} -1 \\ 0 \end{array} \right).
$$
corresponding respectively to the following two global minimizers and two global minimizers of $J$ on $\mathcal M$:
$$
P_{0,1}:=\left( \begin{array}{cc} 1 & 0 \\  0 & 0  \end{array} \right), \quad P_{0,-1}:=\left( \begin{array}{cc} 0 & 0 \\ 0 & 1 \end{array} \right), \quad P_{1,0}:= \frac 12 \left( \begin{array}{cc} 1 & 1 \\ 1 & 1  \end{array} \right), \quad P_{-1,0}:= \frac 12 \left( \begin{array}{rr} 1 & -1 \\ -1 & 1  \end{array} \right).
$$
For $\beta=0$ and $0 < c < \alpha/4$,  $P_{0,1}$ and $P_{0,-1}$ still are non-degenerate local minima of $J$ on $\mathcal M$, but $P_{0,1}$ is a non-global minima ($J(P_{0,1})=c$), while  $P_{0,-1}$ is a global minima ($J(P_{0,-1})=-c$).

\medskip

\noindent
{\bf The Roothaan algorithm~\eqref{eq:Roothaan} may converge to local non-global minima}

\medskip

\noindent
Proposition~\ref{prop:SCF} shows that the Roothaan algorithm always converge. However, it does not necessarily converge to a global minimum. To prove that, let us observe that, in the simple setting considered here, the Roothaan algorithm can be reformulated as a dynamical system on $\mathbb S^1$. Introducing the function $F: \R^2 \to \R^2$ with components
\begin{align*}
F_1(x,z) &= \frac{\frac{1}{8}(2a^2+2a\sqrt\alpha) x-\frac\beta 2}{\left( \left( \frac{1}{8}(2a^2+2a\sqrt\alpha) x-\frac\beta 2\right)^2 + \left( \frac{1}{8}(2a^2+2a\sqrt\alpha+\alpha) z-\frac c 2\right)^2 \right)^{1/2}}, \\
F_2(x,z) &= \frac{\frac{1}{8}(2a^2+2a\sqrt\alpha+\alpha) z-\frac c 2}{\left( \left( \frac{1}{8}(2a^2+2a\sqrt\alpha) x-\frac\beta 2\right)^2 + \left( \frac{1}{8}(2a^2+2a\sqrt\alpha+\alpha) z-\frac c 2\right)^2 \right)^{1/2}},
\end{align*}
which maps $\mathbb S^1$ into itself, the iterates of the Roothaan algorithm are given by $(x_{k+1}^{\rm Rth},z_{k+1}^{\rm Rth})=F(x_{k}^{\rm Rth},z_{k}^{\rm Rth})$. Getting back to the case when $\beta=0$ and $0 < c < \alpha/4$, we see that $X_{0,1}$ is a fixed point of $F$ and that the Jacobian matrix of $F$ at $X_{0,1}$ is
$$
F'(X_{0,1})= \begin{pmatrix} \frac{2a^2+2a\sqrt \alpha}{2a^2+2a\sqrt\alpha+\alpha-4c} & 0 \\ 0 & 0 \end{pmatrix}.
$$
Under the condition $0 < c < \alpha/4$, $F'(X_{0,1})$ is a contraction, which shows that the attraction basin of $P_{0,1}$ for the Roothaan algorithm~\eqref{eq:Roothaan} contains an open neighborhood of $P_{0,1}$ in $\mathcal M$.

\medskip

\noindent
{\bf The Roothaan algorithm applied to $\widetilde J$ does not always converge}

\medskip

\noindent
For $\alpha=1$, $c=0$ and $|4\beta| < 1$, we have 
$$
j(x,z)= \beta x + \frac 18 x^2 = \frac 18 (x+4\beta)^2 - 2 \beta^2.
$$
This function therefore has two global minimizers on $\mathbb S^1$, given by 
$$
X_+:=\left( \begin{array}{c} -4\beta \\ \sqrt{1-16\beta^2} \end{array} \right), \quad X_{-}:=\left( \begin{array}{c}  -4\beta \\ -\sqrt{1-16\beta^2} \end{array} \right)
$$
corresponding respectively to the following two global minimizers of $J$ on $\mathcal M$:
$$
P_+ :=\left( \begin{array}{cc} \frac{1+ \sqrt{1-16\beta^2}}2 & -2\beta  \\  -2\beta & \frac{1- \sqrt{1-16\beta^2}}2  \end{array} \right), \quad P_-:=\left( \begin{array}{cc} \frac{1- \sqrt{1-16\beta^2}}2 & -2\beta  \\  -2\beta & \frac{1+ \sqrt{1-16\beta^2}}2  \end{array} \right).
$$
The minimum of $J$ on $\mathcal M$ is given by $J(P_+)=J(P_-)=- 2 \beta^2$. The Roothaan algorithm can be formulated as an alternate direction method~\cite{cances2000c}. In terms of the function $\widetilde j$, it reads
$$
x_{k+1} \in  \mathop{\rm argmin}_{x \in [-1,1]} \left( \frac{\beta x}2 + \frac 18 x_kx \right) = \mathop{\rm argmin}_{x \in [-1,1]} \left( (4 \beta+x_k) x \right) = \left| \begin{array}{ll} \{-1\} & \mbox{ if } 4\beta + x_k > 0, \\\ \{1\} & \mbox{ if } 4\beta + x_k < 0, \\ {[}-1,1{]} & \mbox{ if } 4\beta + x_k = 0. \end{array} \right. 
$$

It follows that almost surely, the Roothaan algorithm applied $\widetilde J$ will oscillate between the states $P_{1,0}$ and $P_{0,1}$, which are not critical points of $\widetilde J$ (or, equivalently, $J$) on $\mathcal M$.

\medskip

\noindent
{\bf Minimizing $\widetilde J$ on $\mathcal K$ does not always provide the global minimizers of $J$ on $\mathcal M$}

\medskip

\noindent
We now consider the case when $M = 3$ and $m =1$ and introduce the matrices
\begin{align*}
    A = \begin{pmatrix}
        a_1 & 0 &0\\0 & a_2 & 0\\ 0&0&a_3
    \end{pmatrix}, \quad C= \begin{pmatrix}
        \beta & - \frac \alpha2 (a_2 - a_1)^2 & 0 \\ -\frac \alpha2 (a_2 - a_1)^2 & \beta & -\frac \alpha2 (a_3 - a_2)^2\\ 0  & -\frac \alpha2 (a_3 - a_2)^2 & \beta
    \end{pmatrix},
\end{align*}
with $0  \le a_1 < a_2 < a_3$, $\alpha > 0$, $\beta \in \R$. We have for all $D \in \R^{3 \times 3}_{\rm sym}$ such that $\tr(D)=1$,
\begin{align*}
    \widetilde J(D) &= \tr(CD) + \frac 14 \lVert [A,D] \rVert^2 = \sum_{i,j =1}^{M} C_{ij} D_{ij} + \frac 14 \sum_{i,j=1}^{M} (a_i - a_j)^2 D_{ij}^2 \\  &=\sum_{i = 1}^{M}C_{ii}D_{ii}  + \sum_{i \neq j}(a_i - a_j)^2 \left( D_{ij} + 2 \frac{C_{ij}}{(a_i - a_j)^2} \right)^2 - \sum_{i\neq j }
\frac{C_{ij}}{(a_i - a_j)^2} \\
&= \beta + 2 (a_2 - a_1)^2 (D_{12}-\alpha)^2 + 2 (a_3 - a_2)^2 (D_{23}-\alpha)^2 + 2(a_3-a_1)^2 D_{13}^2 \\ & \quad - 2\alpha^2 ((a_2 - a_1)^2+(a_3 - a_2)^2).
\end{align*}
We thus have 
$$
\mathcal D:= \mathop{\rm argmin}_{\substack{ D \in \R^{3 \times 3}_{\rm sym} \; \\ \tr(D)=1}} \widetilde J(D) = \left\{     D = \begin{pmatrix}
        n_1 & \alpha& 0 \\ \alpha&n_2&\alpha\\0&\alpha &n_3
    \end{pmatrix}, \; n_1,n_2,n_3 \in \R, \; n_1+n_2+n_3=1 \right\}.
$$
Since for any $D \in \mathcal D$, $[D^2]_{13}=\alpha^2 > 0 = D_{13}$, we have $\mathcal D \cap \mathcal M = \emptyset$, while for $\alpha > 0$ small enough, we have
$$
 \begin{pmatrix}
        1/3 & \alpha& 0 \\ \alpha& 1/3 &\alpha\\0&\alpha &1/3
    \end{pmatrix} \in \mathcal D \cap \mathcal K.
$$
It follows that for $\alpha > 0$ small enough, the set of minimizers of $\mathcal J$ on $\mathcal K$ does not contain any point of $\mathcal P$. The global minimizer of $J$ on $\mathcal M$ can therefore not be found by minimizing the convex functional $\widetilde J$ on the convex set $\mathcal K$. Note that for this example, the matrix $H_\star$ introduced in Theorem~\ref{thm:convex_pbm} is given by $H_\star=\beta I_2$.

\medskip

\noindent
Taking now $a_1=1$, $a_2=2$, $a_3=3$, $\beta=0$, $\alpha=1/2$, we obtain that in this case $\mathcal D \cap \mathcal K=\emptyset$. The matrix $H_\star$ and the set of minimizers of $\widetilde J$ on $\mathcal K$ are given by
$$
H_\star =  \begin{pmatrix}
        0 & -1/9 & 1/9 \\ -1/9 & 0 & -1/9 \\ 1/9 & - 1/9 & 0
    \end{pmatrix} \quad  \mbox{and} \quad \mathop{\rm argmin}_{D \in \mathcal K} \widetilde J(D) = \left\{  \begin{pmatrix} 2/9 &  5/18 &  1/18 \\
 5/18 &   5/9 &   5/18 \\
 1/18 &  5/18 &  2/9 \end{pmatrix} \right\}.
$$
The eigenvalues of $H_\star$ are $\mu_1=\mu_2 = -1/9$, $\mu_3=2/9$. In agreement with the theoretical results, there is no gap between the first and second eigenvalues of $H_\star$ (recall that, here, $m=1$).

\subsection{DMET bath construction for the benzene molecule}
\label{sec:molecules}

Numerical tests on molecular systems can be done using the following methodology: 
\begin{description}
    \item[Step 1:] The ground-state spin-unpolarized 1-RDM of the molecular system under consideration is computed at the chosen level of theory (e.g. Hartree--Fock, configuration interaction single-double - CISD -, coupled-cluster single-double - CCSD -, complete active space self-consistent field - CASSCF -) in the chosen gaussian atomic basis set, using the PySCF Python library~\cite{sun2020recent,sun2018pyscf,sun2015libcint}. Using L\"owdin orthonormalization, the non-orthogonal, atom-centered, gaussian-orbital basis is transformed into an orthonormal, and still reasonably localized, basis, which we call the L\"owdin basis for convenience. In particular, each basis function of the L\"owdin basis is essentially localized around a specific atom, which allows us to obtain a decomposition~\eqref{eq:fragmentation} of the variational approximation space by simply partitioning the atoms in the molecule. We denote by $\gamma_{\rm GS}$ the spin-unpolarized ground-state 1-RDMs of the molecule in the L\"owdin basis for the chosen level of theory. This matrix, together with the list of atoms each L\"owdin orbital is attached to, is exported in a file.
    \item[Step 2:] The rest of the calculation:
    \begin{itemize}
\item computation from the chosen fragmentation and the 1-RDM $\gamma_{\rm GS}$ of the $A$, $B$, $C$ matrices of the bath construction problems associated with each fragment, using formulae \eqref{rdm1_basis1}--\eqref{eq:BCP-C},
\item numerical solutions of~\eqref{eq:OptProblem}--\eqref{eq:CostFunctionJ} and \eqref{eq:convexification} by the different algorithms presented in Section~\ref{sec:numerical_methods},
\end{itemize}
is performed by a homemade code.
\end{description}

For the sake of brevity, we only report here calculations on the benzene molecule (C$_6$H$_6$), in which the 12 atoms are partitioned into 6 identical fragments, each fragment being composed of a carbon atom and the hydrogen atom covalently bonded to it (see Fig.~\ref{fig:physical_systems} (a)). We used the STO-3G minimal orbital basis set, for which there is a total of $L=36$ orbitals. Each fragment contains $\ell=6$ orbitals, so that we have for this example $M=30$. We computed the ground-state 1-RDM $\gamma_{\rm GS}$ at the CCSD level of theory. As we imposed the symmetry in the PySCF calculation, $\gamma_{\rm GS}$ has the symmetry of the benzene molecule. In particular, all six bath construction problems are identical. The trace of $\gamma_{\rm GS}$ is equal to 21, the number of electron pairs in the benzene molecule. We have $\|\gamma_{\rm GS}^2-\gamma_{\rm GS}\| \simeq 0.067$,
which shows that $\gamma_{\rm GS}$ is close to an orthogonal projector (weakly-correlated system).

\begin{figure}[ht]
\centering
\includegraphics[width=0.25\linewidth]{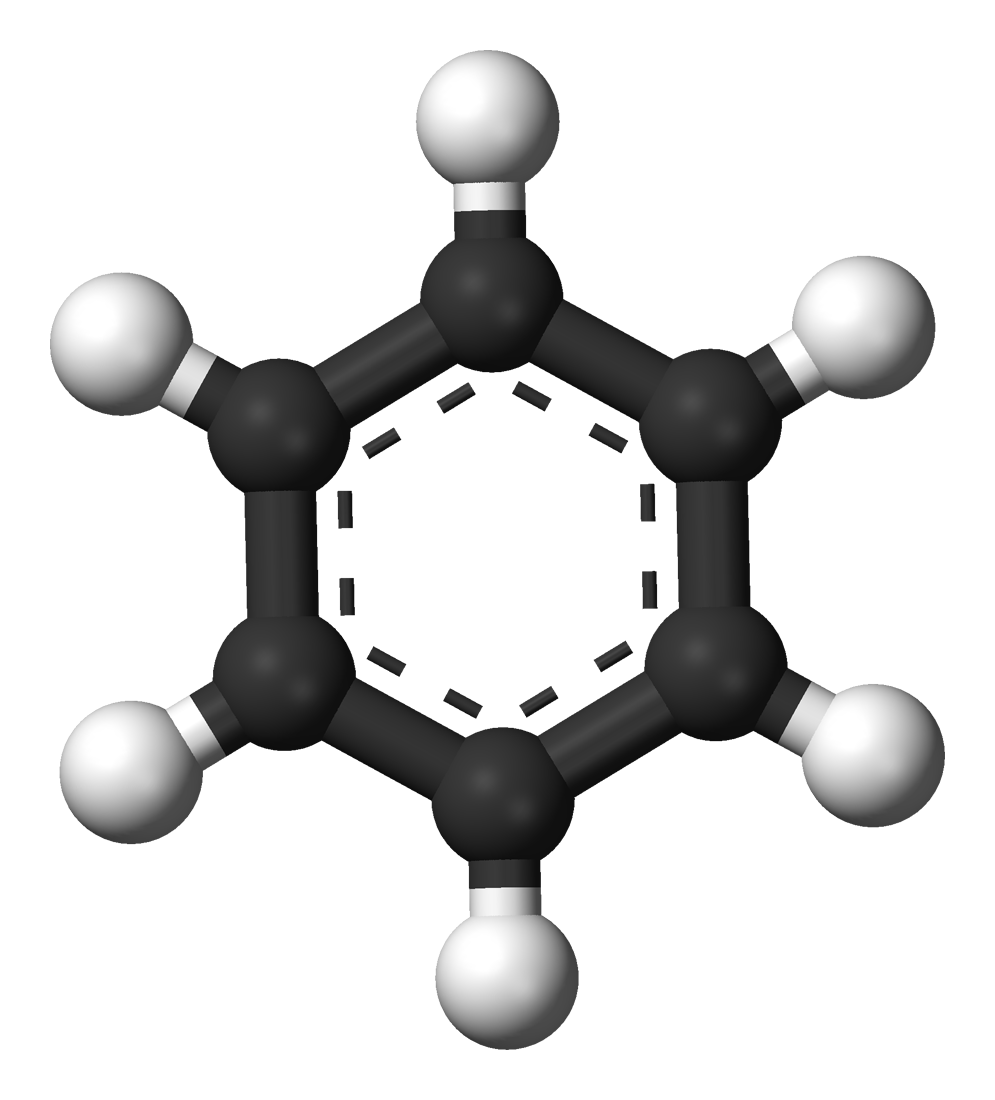}
\caption{Benzene molecule C$_6$H$_6$ (carbon atoms are in black, hydrogen atoms in white) . \label{fig:physical_systems}(https://commons.wikimedia.org/w/index.php?curid=2099493)}
\end{figure}

\medskip

 The gap $\mu_{m+1}-\mu_m$ and the Frobenius norm $\|D_*^2-D_*\|$, where $\mu_1 \le \cdots \le \mu_M$ are the eigenvalues of $H_*:=\nabla \widetilde J(D_*)$ and $D_*$ is the found minimizer of the convexified problem  \eqref{eq:convexification}, are plotted as a function of the bath dimension $m$ in Fig~\ref{fig:benzene_convexified}. We see that, by Theorem~\ref{thm:convex_pbm}, ODA provides the global minimizer $P_*$ of~\eqref{eq:OptProblem}--\eqref{eq:CostFunctionJ} for $m \le 5$, but not for $m \ge 6$.

 \begin{figure}[ht]
    \centering
    \begin{minipage}{0.49\textwidth}
        \includegraphics[width=1.0\textwidth]{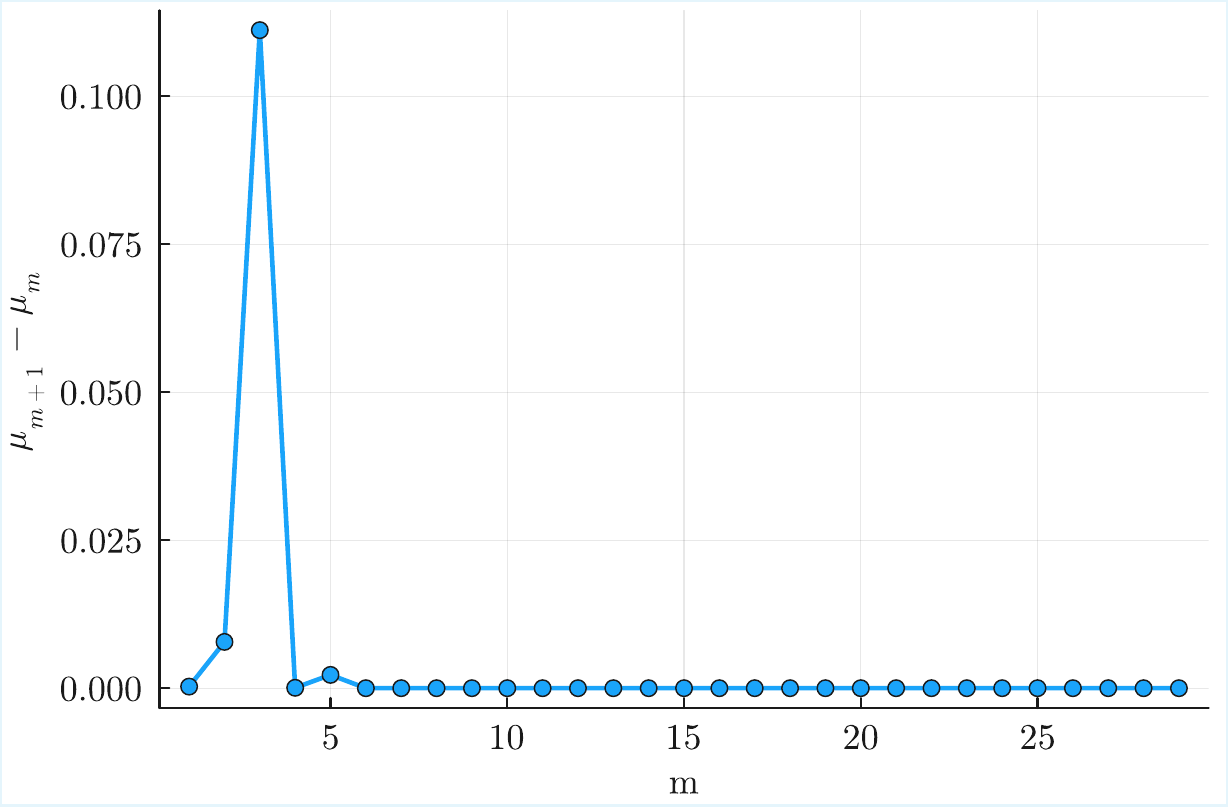}
    \end{minipage}
    \hfill
    \begin{minipage}{0.49\textwidth}
        \includegraphics[width=1.0\textwidth]{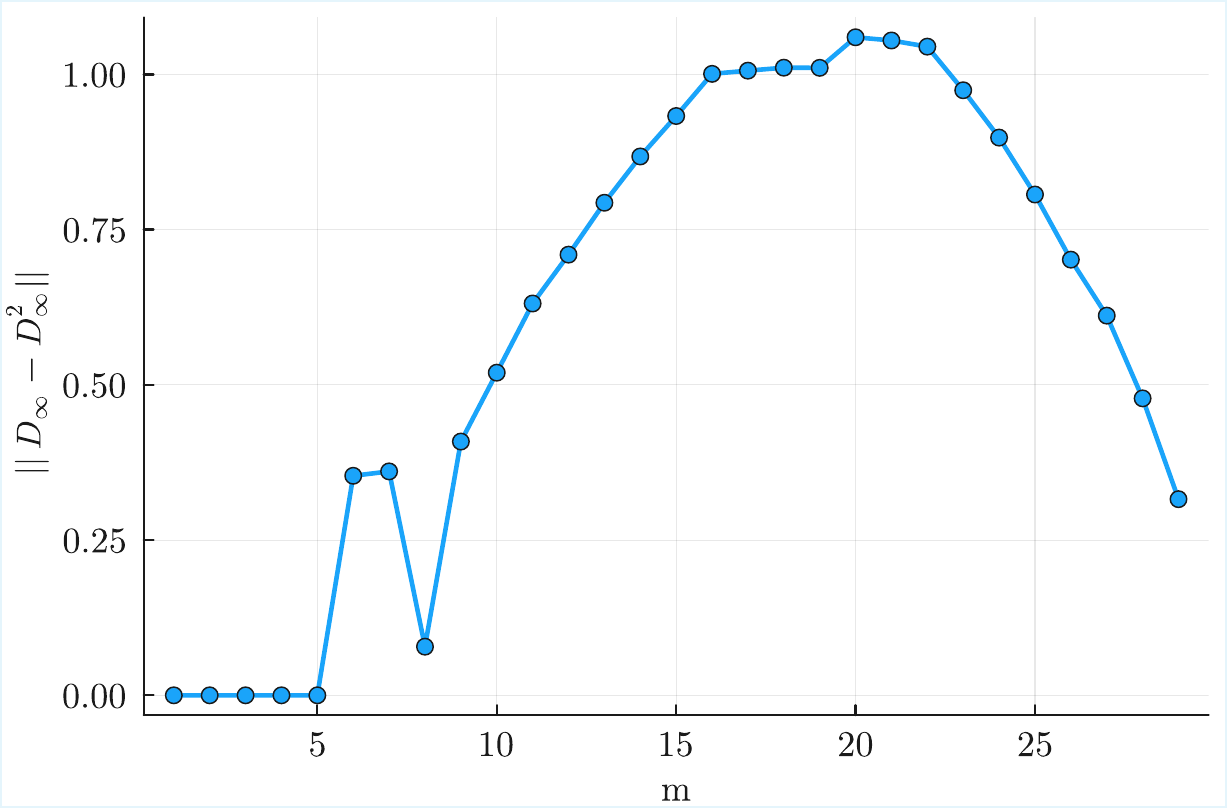}
    \end{minipage}
    \caption{Gap $\mu_{m+1}-\mu_m$ and  non-idempotency criterion $\|D_\star - D_\star^2\|$ as functions of the bath dimension $m$ for the benzene molecule. \label{fig:benzene_convexified}}
\end{figure}

Convergence rates of the SCF and Stiefel/Grassmann trust-region algorithms for $m=3$ and $m=15$ are displayed in Figs.~\ref{fig:benzene_m=3} and \ref{fig:benzene_m=6}, respectively. For $m=3$, we also show the ODA convergence rate since this algorithm provides the global mimimum for $m \ge 5$ ($ \mathcal{J}_{min}^\gamma  \simeq  0.0094538$ for $m=3$). When initialized with the natural initial guess $P_0=\1_{(-\infty,c_m]}(C)$ (the global minimizer when $A=0$), all algorithms quickly converge to the global minimum. For $m=15$, the SCF algorithm initialized with $P_0=\1_{(-\infty,c_m]}(C)$ has a long plateau about $3 \times 10^{-4}$ higher than the assumed global minimum ($\mathcal{J}_{min}^\gamma  \simeq 2.0815393 \times 10^{-6}$ for $m=15$), while  the trust-region algorithms used on both the Grassmann and Stiefel manifolds also display long plateaus about $5 \times 10^{-5}$ higher than the assumed global minimum. Taking as initial guess one of the two accumulation points of the sequence $(P_k^{\rm ODA})_{k \in \N}$ generated by the ODA algorithm (denoted abusively by $P_\infty^{\rm ODA}$), the SCF and Grassmann trust-region algorithms display long plateaus, but only slightly above ($\sim 10^{-5}$ and $3 \times 10^{-7}$ respectively) the assumed global minimum, while the Stiefel trust-region algorithm quickly reaches the assumed global minimum.

\begin{figure}[ht]
    \centering
    \includegraphics[width=0.6\textwidth]{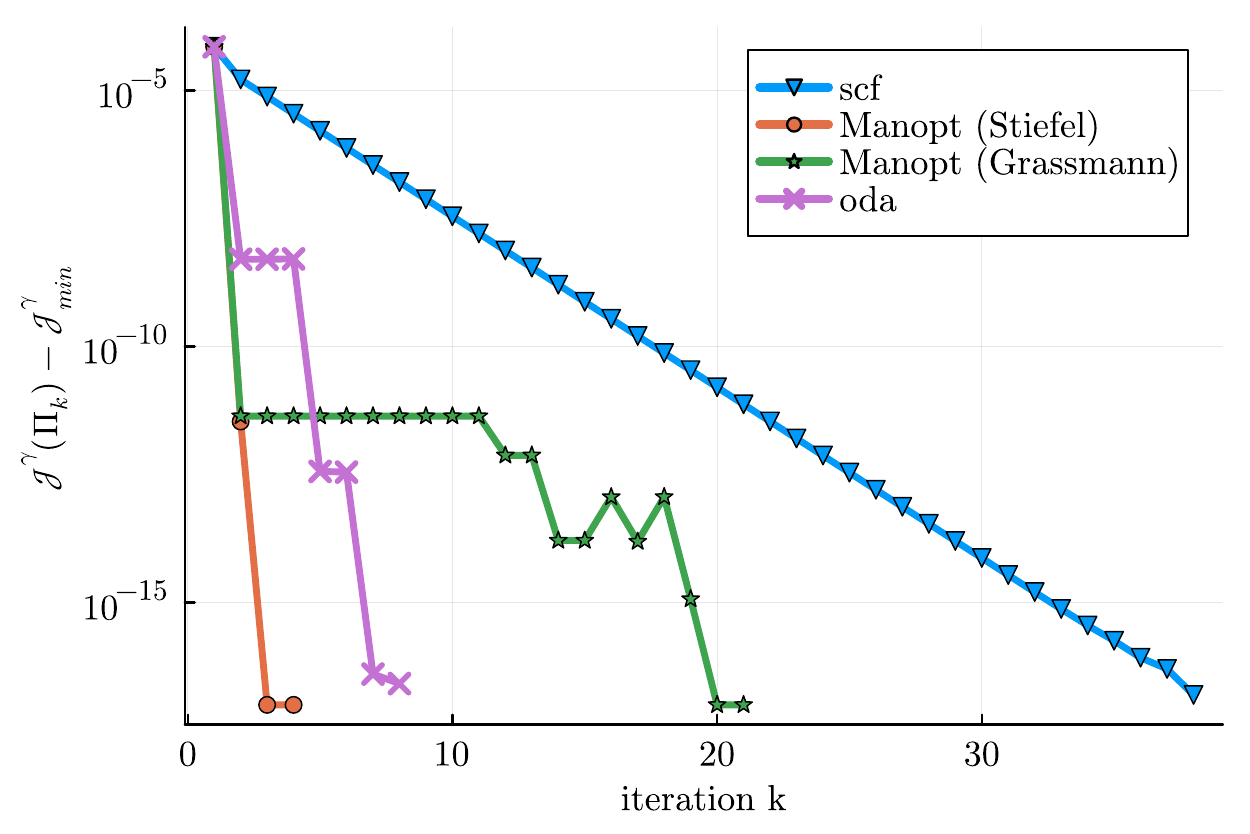}
    \caption{Decay of the cost function  $\mathcal{J}^\gamma$ for benzene, a bath dimension $m=3$, various numerical algorithms, and the inital guess $P_0 = \1_{(-\infty,c_{m})}(C)$. For $m=3$, the assumed global minimum is $ \mathcal{J}_{min}^\gamma  \simeq  0.0094538$ \label{fig:benzene_m=3} }
\end{figure}

\begin{figure}[ht]
    \centering
    \begin{minipage}{0.485\textwidth}
    \includegraphics[width=1.05\textwidth]{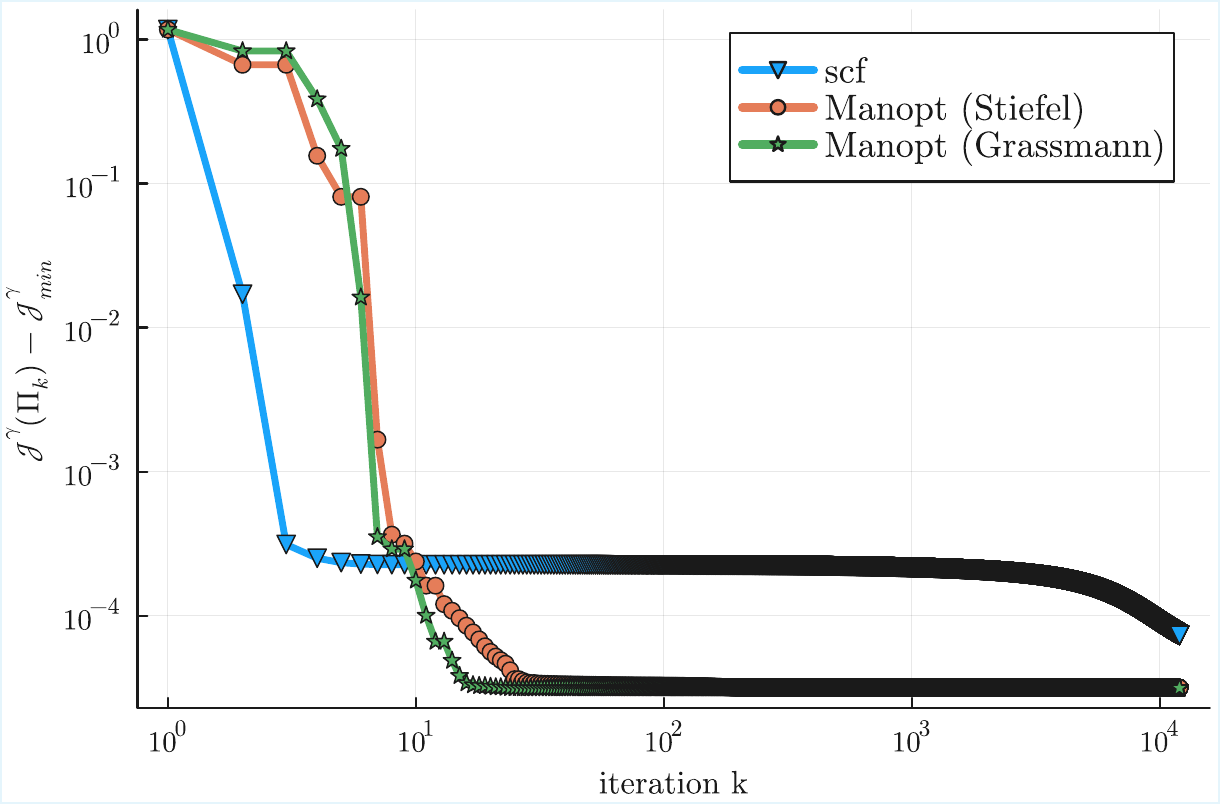}
        \subcaption{Initial guess $P_0 = \1_{(-\infty,c_{m})}(C)$.}
    \end{minipage}
    \hfill
    \begin{minipage}{0.48\textwidth}
    \includegraphics[width=1.0\textwidth]{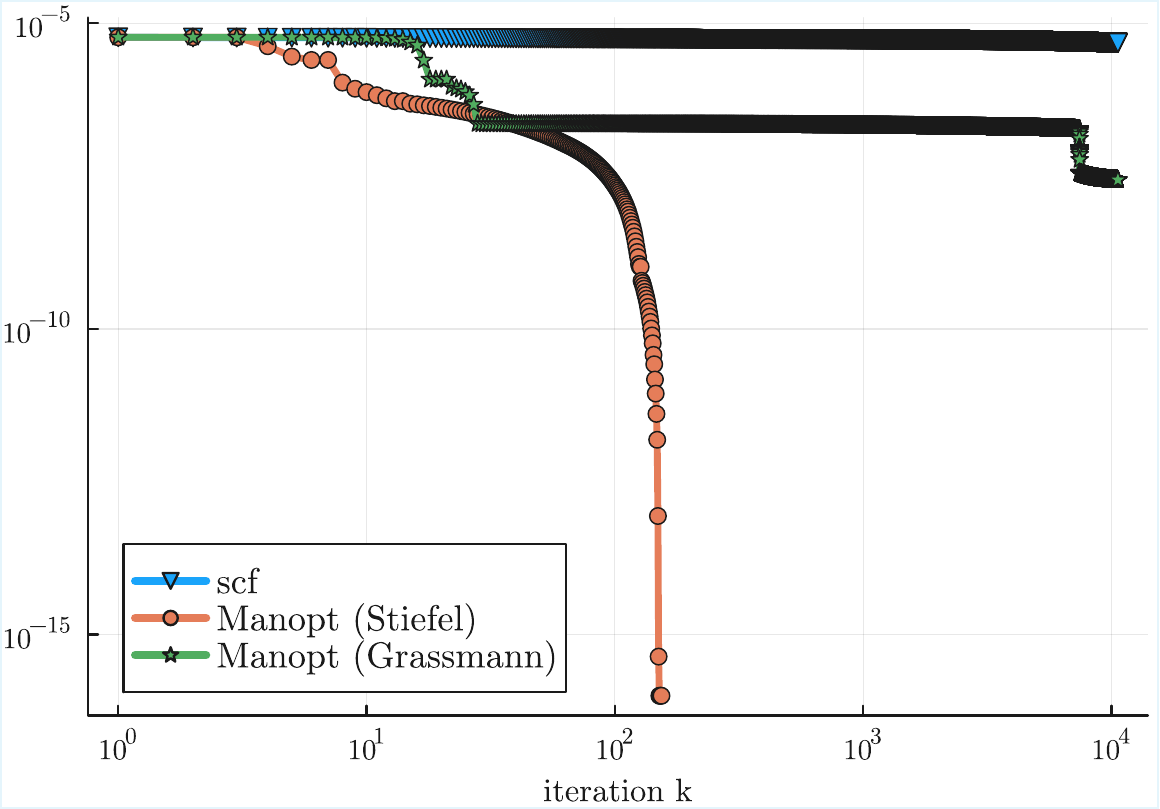}
        \subcaption{Initial guess $P_0 = P_{\infty}^{\rm{ODA}}$.}
    \end{minipage}
    \caption{Decay of the cost function  $\mathcal{J}^\gamma$ for benzene, a bath dimension $m=15$, various numerical algorithms, and two different initializations. For $m=15$, the assumed global minimum is $\mathcal{J}_{min}^\gamma  \simeq 2.0815393 \times 10^{-6}$.\label{fig:benzene_m=6}}
\end{figure}

\section{Conclusion}

The quadratic Grassmann optimization problem~\eqref{eq:OptProblem}--\eqref{eq:CostFunctionJ} arising from the DMET bath-construction problem~\eqref{eq:opt_Pi} has interesting mathematical and numerical properties. As the Hartree--Fock problem - a famous quadratic Grassmann optimization problem -, it satisfies an Aufbau principle (Theorem~\ref{thm:Aufbau}). A major qualitative difference between these two problems is that the basic Roothaan self-consistent field (SCF) algorithm always converges to a local minimizer of~\eqref{eq:OptProblem}--\eqref{eq:CostFunctionJ} (Proposition~\ref{prop:SCF}), whereas for Hartree--Fock, it often oscillates between two points which are not critical points of the Hartree--Fock functional~\cite{cances2000b}. Another interesting specificity of the cost function $J$ defined in  \eqref{eq:CostFunctionJ} is that the {\em convex} quadratic function $\widetilde J$ defined in~\eqref{eq:def_tildeJ} takes the same values as $J$ on the Grassmann manifold $\mathcal M$. Minimizing $\widetilde J$ on the convex hull ${\rm CH}(\mathcal M)$, using e.g. the optimal damping algorithm (ODA), yields a global minimizer $D_\star \in {\rm CH}(\mathcal M)$, and the matrix $H_\star := \nabla \widetilde J(D_\star)$ is independent of the found minimizer. The results in Theorem~\ref{thm:convex_pbm} show that if $H_\star$ satisfies the gap condition $\mu_m < \mu_{m+1}$, then $D_\star$ is a {\em global} minimizer of the non-convex problem~\eqref{eq:OptProblem}--\eqref{eq:CostFunctionJ}. Otherwise, the numerical tests we have run so far seem to indicate that $D_\star$ provides a good starting point for Riemannian optimization methods to reach the assumed global minimum of $J$ on~$\mathcal M$.

\section*{Acknowledgements} This project has received funding from the Simons Targeted Grant in Mathematics and Physical Sciences on Moir\'e Materials Magic (Award No. 896630, E.C., L.L.), the European Research Council (ERC) under the European Union's Horizon 2020 research and innovation program (grant agreement EMC2 No 810367) and the EPIQ PEPR project (ANR-22-PETQ-0007). The authors thank Nicolas Boumal, Laura Grazioli, and Raehyun Kim for useful discussions. 

\printbibliography
\end{document}